\newtheorem{theorem}{Theorem}[section]
\newtheorem{lemma}[theorem]{Lemma}
\theoremstyle{definition}
\theoremstyle{remark}
\numberwithin{equation}{section}
\renewcommand{\Gamma}{\varGamma}
\renewcommand{\epsilon}{\varepsilon}
\renewcommand{\hat}{\widehat}
\renewcommand{\leq}{\leqslant}
\renewcommand{\geq}{\geqslant}
\newcommand{\K}{\mathcal{K}}
\newcommand{\E}{\mathbb{E}^3}
\def\apeir{\mathop{\rm apeir}}
\begin{document}

%\title[short text for running head]{full title}
\title[Polygonal Complexes]{Regular Polygonal Complexes in Space, II}

%    Only \author and \address are required; other information is
%    optional.  Remove any unused author tags.

%    author one information
% \author[short version for running head]{name for top of paper}
\author{Daniel Pellicer}
\address{Instituto de Matematicas, Unidad Morelia, CP 58089, Morelia, Michoacan, Mexico}
\curraddr{}
\email{pellicer@matmor.unam.mx}
\thanks{}

%    author two information
\author{Egon Schulte}
\address{Northeastern University,
Boston, MA 02115, USA}
\curraddr{}
\email{schulte@neu.edu}
\thanks{Supported by NSF grant DMS--0856675}

\subjclass[2000]{Primary 51M20;  Secondary: 52B15}
\keywords{Regular polyhedron, regular polytope, abstract polytope, complex, crystallographic group}

\date{}

%\dedicatory{}

\begin{abstract}
Regular polygonal complexes in euclidean $3$-space $\E$ are discrete polyhedra-like structures with finite or infinite polygons as faces and with finite graphs as vertex-figures, such that their symmetry groups are transitive on the flags. The present paper and its predecessor describe a complete classification of regular polygonal complexes in $\E$. In Part I we established basic structural results for the symmetry groups, discussed operations on their generators, characterized the complexes with face mirrors as the $2$-skeletons of the regular $4$-apeirotopes in $\E$, and fully enumerated the simply flag-transitive complexes with mirror vector~$(1,2)$. In this paper, we complete the enumeration of all regular polygonal complexes in $\E$ and in particular describe the simply flag-transitive complexes for the remaining mirror vectors. It is found that, up to similarity, there are precisely 25 regular polygonal complexes which are not regular polyhedra, namely 21 simply flag-transitive complexes and $4$ complexes which are $2$-skeletons of regular $4$-apeirotopes in $\E$.
\end{abstract}

\maketitle

%    Text of article.

\section{Introduction}

The present paper and its predecessor \cite{pelsch} describe a complete classification of regular polygonal complexes in the euclidean $3$-space $\E$. Polygonal complexes are discrete polyhedra-like structures composed of convex or non-convex, planar or skew, finite or infinite (helical or zigzag) polygonal faces, always with finite graphs as vertex-figures, such that each edge lies in at least two, but generally $r\geq 2$ faces, with $r$ not depending on the particular edge. The various kinds of $3$-dimensional polyhedra that have been studied in the literature are prominent examples of polygonal complexes, obtained when $r=2$ (see Coxeter~\cite{crsp,coxeter}, Gr\"unbaum~\cite{gr1} and McMullen \& Schulte~\cite{arp}). A polygonal complex is {\em regular\/} if its full euclidean symmetry group is transitive on the flags.

Our two papers are part of an ongoing program that combines a {\em skeletal\/} approach to polyhedra in space pioneered in \cite{gr1} (see also Dress~\cite{d1,d2} and McMullen \& Schulte~\cite{ordinary}), with an effort to study symmetry of discrete polyhedra-like space structures through transitivity properties of their symmetry group. The full enumeration of the chiral polyhedra in $\E$ in \cite{chiral1, chiral2} (see also Pellicer \& Weiss~\cite{pelwei}), as well as a number of corresponding enumeration results for figures in higher-dimensional euclidean spaces by McMullen~\cite{ pm,pm1,pm2} (see also Arocha, Bracho \& Montejano~\cite{ar} and \cite{bra}), are examples of recent successes of this program; for a survey, see~\cite{ms3}.

The paper is organized as follows. In Section~\ref{terba} we review basic properties of regular polygonal complexes and their symmetry groups, and elaborate on two important operations that produce new regular complexes from old. Then in Sections~\ref{mirr11}, \ref{mir0k} and \ref{mir2k}, respectively, we enumerate the simply flag-transitive regular polygonal complexes with mirror vectors $(1,1)$, $(0,k)$ and $(2,k)$, with $k=1$ or $2$. In Section~\ref{mirrcyc} we eliminate the possibility that a simply flag-transitive regular polygonal complex with mirror vector $(0,k)$ has pointwise edge stabilizers that are cyclic of order $r\geq 3$. Together with the results of \cite{pelsch}, our findings complete the enumeration of all regular polygonal complexes in $\E$. Overall we establish that, up to similarity, there are precisely 25 regular polygonal complex in $\E$ which are not regular polyhedra, namely 21 simply flag-transitive complexes and $4$ complexes which are $2$-skeletons of regular $4$-apeirotopes in $\E$.

\section{Terminology and basic facts}
\label{terba}

A {\em finite polygon\/} $(v_1, v_2, \dots, v_n)$ in euclidean $3$-space $\E$ is a figure formed by distinct points $v_1, \dots, v_n$, together with the line segments $(v_i, v_{i+1})$, for $i = 1, \dots, n-1$, and $(v_n, v_1)$. Similarly, an {\em infinite polygon\/} consists of a sequence of distinct points $(\dots, v_{-2},v_{-1}, v_0, v_1, v_2,\dots)$ and of line segments $(v_i, v_{i+1})$ for each $i$, such that each compact subset of $\E$ meets only finitely many line segments. In either case the points and line segments are the {\em vertices\/} and {\em edges\/} of the polygon, respectively.

A {\em polygonal complex}, or simply {\em complex}, $\K$ in $\E$ consists of a set $\mathcal{V}$ of points, called {\em vertices}, a set $\mathcal{E}$ of line segments, called {\em edges}, and a set $\mathcal{F}$ of polygons, called {\em faces}, such that the following properties are satisfied. The graph defined by $\mathcal{V}$ and $\mathcal{E}$, called the {\em edge graph\/} of $\K$, is connected. Moreover, the vertex-figure of $\K$ at each vertex of $\K$ is connected. Recall that the {\em vertex-figure\/} of $\K$ at a vertex $v$ is the graph, possibly with multiple edges, whose vertices are the neighbors of $v$ in the edge graph of $\K$ and whose edges are the line segments $(u,w)$, where $(u, v)$ and $(v, w)$ are edges of a common face of $\K$.  It is also required that each edge of $\K$ is contained in exactly $r$ faces of $\K$, for a fixed number $r \geq 2$. Finally, $\K$ is {\em discrete\/}, in the sense that each compact subset of $\E$ meets only finitely many faces of $\K$.

A complex with $r=2$ is also called a {\em polyhedron\/}. Finite or infinite polyhedra in $\E$ with high symmetry properties have been studied extensively (for example, see \cite[Ch.~7E]{arp} and \cite{gr1,ordinary,pelwei,chiral1,chiral2}).

A polygonal complex $\K$ is said to be (geometrically) {\em regular} if its symmetry group $G:=G(\K)$ is transitive on the flags (triples consisting of a vertex, an edge, and a face, all mutually incident). We simply refer to the (full) symmetry group $G$ as the {\em group of $\K$}. If $\K$ is regular, its faces are necessarily regular polygons, either finite, planar (convex or star-) polygons or non-planar ({\em skew\/}) polygons, or infinite, planar zigzags or helical polygons. Moreover, its vertex-figures are graphs with single or double edges; the latter occurs precisely when any two adjacent edges of a face of $\K$ are adjacent edges of just one other face of $\K$. We know from \cite{pelsch} that, apart from polyhedra, there are no regular complexes that are finite or have an affinely reducible group.

Let $\K$ be a regular complex, and let $G$ be its group. Let $\Phi := \{F_0, F_1, F_2\}$ be a fixed, or {\em base}, flag of $\K$, where $F_0$ is a vertex, $F_1$ an edge, and $F_2$ a face of $\K$.  If $\Psi$ is a subset of $\Phi$, we let
$G_{\Psi}$ denote its stabilizer in $G$. For $i=0,1,2$ we also set $G_i := G_{\{F_j, F_k\}}$, where $i,j,k$ are distinct.
We showed in \cite{pelsch} that $|G_\Phi|\leq 2$. Thus the group of a regular complex either acts simply flag-transitively or has flag-stabilizers of order $2$. We call $\K$ {\em simply flag-transitive\/} if its (full symmetry) group $G$ acts simply flag-transitively on $\K$.

In \cite{pelsch} we characterized the regular complexes with non-trivial flag stabilizers as the $2$-skeletons of regular $4$-apeirotopes in $\E$. These complexes have planar faces and have {\em face mirrors\/}, the latter meaning that the affine hull of a face is the mirror (fixed point set) of a plane reflection in $G$. There are eight regular $4$-apeirotopes in $\E$; however, since a pair of Petrie-duals among these apeirotopes share the same $2$-skeleton, these only yield four regular complexes $\K$. We can list the eight $4$-apeirotopes in a more descriptive way in four pairs of Petrie duals using the notation of~\cite{arp}.
\smallskip
\begin{equation}
\label{4apeirotopes}
\begin{array}{cc}
\{4, 3, 4\} &\{\{4, 6 \,|\,4\}, \{6, 4\}_3\}\\[.04in]
\apeir \{3, 3\} \!=\! \{\{\infty, 3\}_6 \# \{ \, \}, \{3, 3\}\}
&\{\{\infty, 4\}_4 \# \{\infty\}, \{4, 3\}_3\} \!=\! \apeir\{4, 3\}_3 \\[.04in]
\apeir \{3, 4\} \!=\! \{\{\infty, 3\}_6 \# \{ \, \}, \{3, 4\}\}
&\{\{\infty, 6\}_3 \# \{\infty\}, \{6, 4\}_3\} \!=\! \apeir\{6, 4\}_3 \\[.04in]
\apeir \{4, 3\} \!=\! \{\{\infty, 4\}_4 \# \{\, \}, \{4, 3\}\}
&\{\{\infty, 6\}_3 \# \{\infty\}, \{6, 3\}_4\} \!=\! \apeir \{6, 3\}_4\\[.04in]
\end{array}
\end{equation}
The apeirotopes in the top row are the cubical tessellation $\{4,3,4\}$ and its Petrie dual; these have square faces, and their facets are cubes or Petrie-Coxeter polyhedra $\{4, 6 \,|\,4\}$, respectively. All other apeirotopes have zigzag faces, and their facets are blends of the Petrie-duals $\{\infty,3\}_6$, $\{\infty,6\}_3$ or $\{\infty,4\}_4$ of the plane tessellations $\{6,3\}$, $\{3,6\}$ or $\{4,4\}$, respectively, with the line segment $\{ \,\}$ or linear apeirogon $\{\infty\}$ (see \cite[Ch.~7F]{arp}). These six apeirotopes can be obtained as particular instances from the {\em free abelian apeirotope\/} or ``apeir" construction of \cite{pm,pm1}, which we briefly review here for rank $4$.

Let $\mathcal Q$ be a finite regular polyhedron in $\E$ with symmetry group $G(\mathcal{Q}) = \langle T_1,T_2,T_3\rangle$ (say), where the labeling of the distinguished generators begins at $1$ deliberately. Let $o$ be the centroid of the vertex-set of $\mathcal Q$, let $w$ be the initial vertex of $\mathcal Q$, and let $T_0$ denote the reflection in the point $\frac{1}{2}w$. Then there is a regular $4$-apeirotope in $\E$, denoted $\apeir {\mathcal Q}$, with $T_0,T_1,T_2,T_3$ as the generating reflections of its symmetry group, $o$ as initial vertex, and $\mathcal Q$ as vertex-figure. In particular, $\apeir Q$ is discrete if $\mathcal Q$ is rational (the vertices of $\mathcal Q$ have rational coordinates with respect to some coordinate system). The latter limits the choices of $\mathcal Q$ to $\{3,3\}$, $\{3,4\}$ or $\{4,3\}$, or their Petrie duals $\{4,3\}_3$, $\{6,4\}_3$ or $\{6,3\}_4$, respectively, giving the six remaining regular $4$-apeirotopes in $\E$.

The enumeration of the simply flag-transitive regular complexes is a lot more involved. From now on, unless specified otherwise, we will work under the standard assumption that the complexes $\K$ under consideration are infinite, regular, and simply flag-transitive, and have an affinely irreducible group $G(\K)$.

Thus let $\K$ be an (infinite) simply flag-transitive regular complex, and let $G=G(\K)$ be its (affinely irreducible) group.
We know from \cite{pelsch} that $G_0 = \langle R_0 \rangle$ and $G_1 = \langle R_1 \rangle$, for some point, line or plane reflection $R_0$ and some line or plane reflection $R_1$; moreover, $G_2$ is a cyclic or dihedral group of order $r$ (so $r$ is even if $G_2$ is dihedral). The {\em mirror vector} of $\K$ is the vector $(dim(R_0), dim(R_1))$, where $dim(R_i)$ is the dimension of the mirror of the reflection $R_i$ for $i=0,1$; if $\K$ is a polyhedron, then $G_2$ is generated by a (line or plane) reflection $R_2$ and we refer to $(dim(R_0), dim(R_1),dim(R_2))$ as the {\em complete mirror vector} of $\K$. The face stabilizer subgroup $G_{F_2}$ in $G$ of the base face $F_2$ is given by $G_{F_2}=\langle R_0, R_1 \rangle$ and is isomorphic to a (finite or infinite) dihedral group acting simply transitively on the flags of $\K$ containing $F_2$. Similarly, the vertex-stabilizer subgroup $G_{F_0}$ in $G$ of the base vertex $F_0$ is given by $G_{F_0}=\langle R_1, G_2 \rangle$ and acts simply flag-transitively on (the graph that is) the vertex-figure of $\K$ at $F_0$. (A flag in the vertex-figure of $\K$ at $F_0$ amounts to a pair consisting of an edge and  incident face of $\K$ each containing $F_0$.)  We call $G_{F_0}$ the {\em vertex-figure group\/} of $\K$ at $F_0$. Note that, by our discreteness assumption on complexes, $G_{F_0}$ must be a finite group.

In our previous paper~\cite{pelsch} we already dealt with the complexes with mirror vector $(1,2)$. In this paper, we complete the enumeration of the simply flag-transitive regular complexes and describe the complexes for the remaining mirror vectors. Our approach employs operations on the generators of $G$ which replace one of the generators $R_0$ or $R_1$ while retaining the other as well as the subgroup $G_2$.  This allows us to construct new complexes from old and helps reduce the number of cases to be considered. In particular, we require the following two operations $\lambda_0$ and $\lambda_1$ that involve (not necessarily involutory) elements $R$ of $G_{2}$ with the property that $R_0 R$ or $R_1 R$, respectively, is an involution:
\begin{equation}
\label{opone}
\lambda_0 = \lambda_0(R)\!:\;\,  (R_0, R_1, G_2)\; \mapsto\; (R_0 R, R_1, G_2),
\end{equation}\\[-.45in]
\begin{equation}
\label{optwo}
\lambda_1 = \lambda_1(R) : \:\, (R_0, R_1, G_2)\; \mapsto\; (R_0, R_1R, G_2).
\end{equation}
The corresponding complexes $\K^{\lambda_0}$ and $\K^{\lambda_1}$ are obtained from Wythoff's construction applied with the generators and generating subgroups on the right-hand side of (\ref{opone}) or (\ref{optwo}), respectively.

The two operations in (\ref{opone}) and (\ref{optwo}) can also be applied to regular complexes with face-mirrors by choosing as $R_0$ or $R_1$, respectively, particular   elements of $G_0$ or $G_1$ that do not stabilize the base flag. (Note that we cannot change the entire subgroup $G_0$ or $G_1$, respectively, to its coset $G_{0}R$ or $G_{1}R$, since this is not even a group; instead we must work with particular elements of $G_0$ or $G_1$.)  

In this wider setting of arbitrary regular complexes, the operations $\lambda_0 = \lambda_0(R)$ and $\lambda_1 = \lambda_1(R)$ are invertible at least at the level of groups (but not at the level of complexes in general); in fact, at the  group level, their inverses are given by $\lambda_0(R^{-1})$ and $\lambda_1(R^{-1})$, respectively. While the invertibility of the operations at the level of the corresponding complexes will be immediately clear when the new complex $\K^{\lambda_0}$ or $\K^{\lambda_1}$ is simply flag-transitive, more care is required when the new complex has face-mirrors.  

After we apply an operation $\lambda_0 = \lambda_0(R)$ or $\lambda_1 = \lambda_1(R)$ at the level of (arbitrary) regular complexes, we may arrive at a new complex $\K^{\lambda_0}$ or $\K^{\lambda_1}$ with face mirrors. In this case the (involutory) element $R_{0}R$ of $G_{0}(\K^{\lambda_0})$ or $R_{1}R$ of $G_{1}(\K^{\lambda_1})$ is available as a particular choice of generator to base the inverse operation $\lambda_0(R^{-1})$ or $\lambda_1(R^{-1})$ on (this would have been the only possible choice had $\K^{\lambda_0}$ or $\K^{\lambda_1}$ been simply flag-transitive). While this choice may not directly recover the original complex $\K$ from $\K^{\lambda_0}$ or $\K^{\lambda_1}$, it does produce a regular complex $\mathcal{L}$ containing $\K$ as a (possibly proper) subcomplex. Throughout, we are adopting  the {\em convention\/} to base the inverse operation on the particular element $R_{0}R$ or $R_{1}R$ of its respective subgroup. Note that, when $\K^{\lambda_0}$ or $\K^{\lambda_1}$ has face mirrors, there would have been just one other admissible choice for the particular element besides $R_{0}R$ or $R_{1}R$ (the respective subgroup and the flag stabilizer are isomorphic to $C_{2}\times C_2$ and $C_2$, respectively). 

In our applications, $R$ will always be an involution in $G_2$ and the corresponding operation $\lambda_0$ or $\lambda_1$ will be involutory as well. In particular, we will encounter statements of the form $\K=(\K^{\lambda_0})^{\lambda_0}$ or $\K=(\K^{\lambda_1})^{\lambda_1}$, where throughout an appropriate interpretation (following our convention) is understood if a complex happens to have face mirrors. As we will see, in practice it is only $\lambda_0$ that requires special consideration for complexes with face mirrors (and in only one case).

The following lemmas summarize basic properties of $\lambda_0$ and $\lambda_1$ (see \cite[Lemmas 5.1--5.5]{pelsch}. The first two are saying that the new generators on the right side of (\ref{opone}) and (\ref{optwo}) indeed determine a new regular complex in each case.

\begin{lemma}
\label{lambda1}
Let $\K$ be a simply flag-transitive regular complex with group $G = \langle R_0, R_1, G_2 \rangle$, and let $R$ be an element in $G_2$ such that $R_0 R$ is an involution. Then there exists a regular complex, denoted $\K^{\lambda_0}$, with the same vertex-set and edge-set as $\K$ and with its symmetry group containing $G$ as a (possibly proper) flag-transitive subgroup, such that
\begin{equation}\label{klambda}
\langle R_0 R \rangle \subseteq G_0(\K^{\lambda_0}), \;\;\,
  G_1(\K) = \langle R_1 \rangle \subseteq G_1(\K^{\lambda_0}),\;\;\,
  G_2(\K) \subseteq G_2(\K^{\lambda_0}).
\end{equation}
The complex $\K^{\lambda_0}$ is simply flag-transitive  if and only if the inclusions in (\ref{klambda}) are equalities (or equivalently, at least one of the inclusions in (\ref{klambda}) is an equality).
\end{lemma}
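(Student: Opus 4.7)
The plan is to apply Wythoff's construction with the modified generating triple $(R_0 R, R_1, G_2)$. Since $R\in G_2$ fixes both endpoints of the base edge $F_1$ pointwise while $R_0$ swaps them, the involution $R_0 R$ also swaps these endpoints; hence, with $v := F_0$ as base vertex, the pair $\{v, (R_0 R)\cdot v\}$ is exactly $F_1$, so the base edge is preserved. I would then take the new base face to be $F_2' := \langle R_0 R, R_1\rangle\cdot v$, which is a regular polygon since $\langle R_0 R, R_1\rangle$ is dihedral with involutory generators, and define $\K^{\lambda_0}$ to have vertex-set $G\cdot v$, edge-set $G\cdot F_1$, and face-set $G\cdot F_2'$. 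The vertex-set and edge-set then coincide with those of $\K$, and because $R_0 = (R_0 R)R^{-1}$, the subgroup $\langle R_0 R, R_1, G_2\rangle$ of $G(\K^{\lambda_0})$ actually equals $G$ itself.

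Next I would verify the polygonal complex axioms for $\K^{\lambda_0}$. Connectedness of the edge graph is inherited from $\K$ since it has the same vertices and edges. The vertex-figure at $v$ has the same vertex-figure group $\langle R_1, G_2\rangle$ as for $\K$, though with a different distinguished edge, so it is connected by the same reasoning. The number of faces through each edge is constant by the transitivity of $G$ on edges, and discreteness is inherited from $\K$ since each compact set meets only finitely many vertices of $\K$. Flag-transitivity of $G$ on $\K^{\lambda_0}$ is then established by using $\langle R_0 R, R_1\rangle$ to move between flags containing $F_2'$, $\langle R_1, G_2\rangle$ to move between the flags of the vertex-figure at $v$, and $G$ to move between vertices; thus $\K^{\lambda_0}$ is regular with $G \subseteq G(\K^{\lambda_0})$ acting flag-transitively, and the containments of (\ref{klambda}) are immediate.

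For the final characterization, observe that $G$ acts simply flag-transitively on $\K^{\lambda_0}$, since it does so on $\K$ and the flag sets of the two complexes are in bijection via the correspondence of base flags $\{v,F_1,F_2\}\leftrightarrow\{v,F_1,F_2'\}$. Therefore $G(\K^{\lambda_0})$ is simply flag-transitive if and only if $G = G(\K^{\lambda_0})$, which by orbit--stabilizer (applied to $v$, $F_1$, $F_2'$ in turn) is equivalent to all three inclusions in (\ref{klambda}) being equalities. The equivalence of this with the weaker condition that \emph{at least one} inclusion is an equality follows from the dichotomy established in \cite{pelsch}: a regular complex either is simply flag-transitive, in which case the distinguished subgroups $G_0, G_1$ have order two and $G_2$ has the minimal possible order, or else has flag-stabilizers of order two, in which case $G_0,G_1,G_2$ each properly enlarge simultaneously. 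I expect this last equivalence to be the most delicate point of the proof, since it requires invoking the structural classification of flag-stabilizers of regular complexes from the earlier paper rather than a purely self-contained counting argument.
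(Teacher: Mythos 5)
The construction in your first two paragraphs is essentially the intended one: Wythoff's construction applied to $(R_0R,R_1,G_2)$, keeping the vertex- and edge-sets and replacing the base face by the orbit of $F_0$ under $\langle R_0R,R_1\rangle$. (The paper does not reprove this part here; it refers to \cite[Lemma 5.1]{pelsch}.) The genuine gap is in your third paragraph, at the assertion that ``$G$ acts simply flag-transitively on $\K^{\lambda_0}$.'' This is precisely the point on which the original Lemma 5.1 of \cite{pelsch} was incorrect and which the present restatement is designed to repair (see the paragraph immediately following the lemma). The flag sets of $\K$ and $\K^{\lambda_0}$ are \emph{not} in bijection in general: your proposed bijection $g\Phi\mapsto g\Phi'$ is injective only if the stabilizer in $G$ of the new base flag is trivial, which is the very thing in question. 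Concretely, if the new base face $F_2'=\langle R_0R,R_1\rangle\cdot F_0$ is planar and its affine hull is the mirror of a plane reflection $T$ lying in $G_2(\K)\subseteq G$, then $T$ fixes $F_0$, fixes $F_1$ pointwise, and maps $F_2'$ to itself, so $T$ is a nontrivial element of $G$ stabilizing the new base flag; $\K^{\lambda_0}$ then has face mirrors and is not simply flag-transitive, even though one may well have $G=G(\K^{\lambda_0})$. This situation actually occurs: see Lemma~\ref{facemirr0k} and its use in Sections~\ref{k01} and~\ref{k02}, where applying $\lambda_0$ to the complexes $\K_i(1,1)$ with $i\leq 4$, or to $\K_i(1,2)$ with $i=3,5,6,7$, yields complexes with face mirrors whose mirror reflections already lie in $G_2$.

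As a result both links of your concluding chain break. The equivalence ``simply flag-transitive $\iff G=G(\K^{\lambda_0})$'' fails in the direction $\Leftarrow$, and ``$G=G(\K^{\lambda_0})\iff$ all inclusions in (\ref{klambda}) are equalities'' fails in the direction $\Rightarrow$: in the face-mirror case with $T\in G$ one has $G=G(\K^{\lambda_0})$, yet $G_0(\K^{\lambda_0})=\langle R_0R,T\rangle$ and $G_1(\K^{\lambda_0})=\langle R_1,T\rangle$ have order $4$, so the first two inclusions are strict. Your closing ``dichotomy,'' in which $G_0,G_1,G_2$ are claimed to ``properly enlarge simultaneously'' when flag stabilizers of order $2$ appear, is also false for $G_2$: in the case just described $G_2(\K^{\lambda_0})=G_{\{F_0,F_1\}}=G_2(\K)$ is unchanged. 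The correct route to the final statement goes through Lemma~\ref{halfturn}: a regular complex with face mirrors has $G_0\cong G_1\cong C_2\times C_2$, so $\K^{\lambda_0}$ is simply flag-transitive exactly when its flag stabilizer is trivial, i.e.\ exactly when $G_0(\K^{\lambda_0})$ (equivalently $G_1(\K^{\lambda_0})$) has order $2$, i.e.\ when the first (equivalently the second) inclusion in (\ref{klambda}) is an equality; the third inclusion must be handled separately, since $G_2$ need not grow when face mirrors appear.
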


Lemma~\ref{lambda1} is a slightly revised version of Lemma~5.1 in~\cite{pelsch}, which was incorrect as stated. As pointed out on \cite[p.\! 6692]{pelsch}, there are examples where the new complex $\K^{\lambda_0}$ is not simply flag-transitive but rather has face-mirrors and possibly a strictly larger symmetry group; the latter depends on whether or not the reflections in the face-mirrors of $\K^{\lambda_0}$ are also symmetries of $\K$ (see Section~\ref{k02}). However, by mistake, this possibility was not carried forward to the wording of Lemma~5.1 in~\cite{pelsch}. Our new version corrects this error. Similarly, our Lemmas~\ref{appone} and \ref{apptwo} below are slightly revised versions of corresponding statements in \cite{pelsch}, with the only adjustments directly resulting from those in Lemma~\ref{lambda1}.

By contrast, the simple flag-transitivity is preserved in our next lemma, which describes the effect of the operation $\lambda_1$. In fact, we proved in \cite{pelsch} that~$\lambda_1$, applied to a regular complex with face mirrors, always yields another regular complex with face mirrors.

\begin{lemma}
\label{lambda2}
Let $\K$ be a simply flag-transitive regular complex with group $G = \langle R_0, R_1, G_2 \rangle$, and let $R$ be an element in $G_2$ such that $R_1 R$ is an involution. Then there exists a regular complex, denoted $\K^{\lambda_1}$ and again simply flag-transitive, with the same vertex-set and edge-set as $\K$ and with the same group $G$, such
that
\begin{equation}\label{klambda1}
G_0(\K) = \langle R_0 \rangle = G_0(\K^{\lambda_1}), \;\;\,
\langle R_1 R \rangle = G_1(\K^{\lambda_1}),\;\;\,
G_2(\K) = G_2(\K^{\lambda_1}).
\end{equation}
\end{lemma}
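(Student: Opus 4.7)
The plan is to define $\K^{\lambda_1}$ directly via Wythoff's construction applied to $G$ with the new distinguished subgroups $\langle R_0\rangle$, $\langle R_1R\rangle$, $G_2$ in place of $\langle R_0\rangle$, $\langle R_1\rangle$, $G_2$, using the original base vertex $F_0$ as the initial point. First I would verify that $F_0$ is fixed by the candidate vertex-figure group $\langle R_1R, G_2\rangle$: since $R_1\in G_1$, $R\in G_2$, and all of $G_2$ fix $F_0$, the product $R_1R$ fixes $F_0$ as well. Applying $\langle R_0\rangle$ to $F_0$ then reproduces the original base edge $F_1$. Because $R_1=(R_1R)R^{-1}$ with $R\in G_2$, the new generators still generate $G$, so the ambient group is unchanged and the vertex- and edge-orbits of $\K^{\lambda_1}$ coincide with those of $\K$.

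Next I would construct the new base face. Since $R_1R$ is an involution by hypothesis, $\langle R_0,R_1R\rangle$ is a (finite or infinite) dihedral group, and the orbit $F_2':=\langle R_0,R_1R\rangle\cdot F_0$ carries the structure of a regular polygon (planar, skew, zigzag or helical) by an argument that mirrors the analysis of the base face in Wythoff's construction in \cite{pelsch}. The $G$-orbit of $F_2'$, together with the common vertex- and edge-sets, yields a polygonal complex $\K^{\lambda_1}$ on which $G$ acts flag-transitively. The vertex-figure group at $F_0$ in $\K^{\lambda_1}$ equals $\langle R_1R,G_2\rangle=\langle R_1,G_2\rangle$, so the finiteness and connectedness of the vertex-figure, the discreteness of $\K^{\lambda_1}$, and the requisite edge-multiplicity $r$ are all inherited directly from the corresponding properties of $\K$.

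Finally I would verify simple flag-transitivity and read off the identities in (\ref{klambda1}). If $g\in G$ fixes the new base flag $\{F_0,F_1,F_2'\}$, then $g$ fixes $F_0$ and $F_1$ and hence lies in $G_2$, and $g$ also stabilizes $F_2'$ setwise; combining these constraints with the simple flag-transitivity of $G$ on $\K$ forces $g=1$. The subgroup identities $G_0(\K^{\lambda_1})=\langle R_0\rangle$, $G_1(\K^{\lambda_1})=\langle R_1R\rangle$ and $G_2(\K^{\lambda_1})=G_2$ are then immediate, and the equality of full symmetry groups follows from the invertibility of $\lambda_1$ at the group level (by applying $\lambda_1(R^{-1})$ to $\K^{\lambda_1}$ to recover $\K$). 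The step I expect to be most delicate is this last flag-stabilizer argument, which reduces to showing that $G_2$ meets the new face stabilizer $\langle R_0,R_1R\rangle$ trivially, in complete analogy with the intersection property of $\langle R_0,R_1\rangle$ and $G_2$ for $\K$ itself.
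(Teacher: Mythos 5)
You should first be aware that this paper does not itself prove Lemma~\ref{lambda2}: it is quoted from Part~I (\cite[Lemmas 5.1--5.5]{pelsch}), so the comparison can only be with the framework the paper builds around it. Your outline --- Wythoff's construction applied to $G$ with the new distinguished subgroups $\langle R_0\rangle$, $\langle R_1R\rangle$, $G_2$, the observation that the base edge and hence the vertex- and edge-sets are unchanged, the dihedral structure of $\langle R_0,R_1R\rangle$, and the identity $\langle R_1R,G_2\rangle=\langle R_1,G_2\rangle$ --- is the correct skeleton and is consistent with how the paper uses the operation in Sections~\ref{terba} and~\ref{lam}.

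There is, however, a genuine gap at exactly the step you flag as delicate, and your proposed resolution does not close it. First, the reduction is misstated: an element $g$ fixing the new base flag lies in $G_2\cap \mathrm{Stab}_G(F_2')$, and $\mathrm{Stab}_G(F_2')$ equals $\langle R_0,R_1R\rangle$ times the new flag stabilizer; so proving $G_2\cap\langle R_0,R_1R\rangle=1$ (which is the easy part) does not make the flag stabilizer trivial. Second, the appeal to ``complete analogy with the intersection property for $\K$'' cannot be the whole argument, because the identical reasoning applied to $\lambda_0$ would show that $\K^{\lambda_0}$ is always simply flag-transitive --- which Lemma~\ref{lambda1} and the example $\K_4(1,2)^{\lambda_0}=skel_2(\apeir\{3,4\})$ in Section~\ref{k02} show is false. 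The entire content of Lemma~\ref{lambda2}, and the reason it is stated more strongly than Lemma~\ref{lambda1}, is that $\lambda_1$ never creates face mirrors; any correct proof must use something specific to $\lambda_1$, for instance that a nontrivial stabilizer of the new base flag would have to be the reflection in the plane of a (necessarily planar) $F_2'$, a plane containing the line of $F_1$, and that this forces a contradiction with the simple flag-transitivity of $\K$ --- or the fact, recorded in the paper right before the lemma, that $\lambda_1$ carries face-mirror complexes to face-mirror complexes, combined with a careful inversion argument. Third, your derivation of $G(\K^{\lambda_1})=G$ from ``invertibility of $\lambda_1$ at the group level'' is circular: applying $\lambda_1(R^{-1})$ to $\K^{\lambda_1}$ recovers $\K$ exactly only once one already knows that $\K^{\lambda_1}$ is simply flag-transitive with the stated distinguished generators; the paper explicitly warns that when the new complex has face mirrors the inverse operation may only return a regular complex properly containing the original.
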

\smallskip

The next three lemmas state that the operations $\lambda_0$ and $\lambda_1$ change the mirror vectors in a uniform way, that is, independent of $\K$ (but possibly dependent on whether $G_2$ is dihedral or cyclic). For the first  two lemmas recall our convention about the double iteration of $\lambda_0$ if the new complex $\K^{\lambda_0}$ happens to have face mirrors.

\begin{lemma}
\label{appone}
Let $\K$ be an infinite simply flag-transitive regular complex with an affinely irreducible group $G$ and mirror vector $(2, k)$ for some $k = 1, 2$. Then $G_2$ contains a half-turn $R$. In particular, the corresponding complex $\K^{\lambda_0}$, with $\lambda_0 = \lambda_0(R)$, is a regular complex which either has face mirrors or is simply flag-transitive with mirror vector $(0, k)$; in either case, $\K = (\K^{\lambda_0})^{\lambda_0}$.
\end{lemma}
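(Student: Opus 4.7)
The plan is to produce a half-turn $R \in G_2$, apply the operation $\lambda_0(R)$, identify the mirror vector (or face-mirror character) of the resulting complex, and finally verify that $\lambda_0(R)$ is an involution at the level of complexes.

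First I would pin down the geometry of $R_0$. Since $R_0 \in G_0$ stabilizes $F_1$ setwise but moves $F_0$ to the other endpoint, it swaps the two endpoints of $F_1$, so the mirror $P$ of the plane reflection $R_0$ must be the perpendicular bisector plane of $F_1$, passing through the midpoint $m$ of $F_1$ and perpendicular to the line $\ell$ through $F_1$. In suitable coordinates with $m$ at the origin, $\ell$ the $z$-axis, and $P$ the $xy$-plane, every $g \in G_2$ fixes $\ell$ pointwise and hence acts as an element of $O(2)$ on $P$ and trivially on $\ell$; in particular $g$ commutes with $R_0 = \mathrm{diag}(1,1,-1)$. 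Thus $\langle R_0\rangle \times G_2$ embeds as a finite subgroup of the orthogonal group at $m$.

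Second, and this I expect to be the main obstacle, I would exhibit a half-turn $R$ in $G_2$, that is, an element of $G_2$ that acts on $P$ as rotation by $\pi$ about $m$. The strategy is to analyze the finite vertex-figure subgroup $G_{F_0} = \langle R_1,G_2\rangle$ of $O(3)$ at $F_0$, using that $R_1$ is either a line or a plane reflection (mirror vector $(2,k)$ with $k=1,2$), that $G_2$ is the pointwise stabilizer of $\ell$ inside it, and that $G_2$ acts simply transitively on the $r$ faces of $\K$ at $F_1$. A case split on whether $G_2$ is cyclic or dihedral (and on parity), combined with the classification of finite subgroups of $O(3)$ and the specific structural information from \cite{pelsch} on simply flag-transitive complexes with mirror vector $(2,k)$, rules out every case in which no rotation by $\pi$ about $\ell$ lies in $G_2$.

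Third, in the above coordinates the half-turn $R$ acts as $\mathrm{diag}(-1,-1,1)$, so $R_0R = \mathrm{diag}(-1,-1,-1)$ is the point reflection at $m$, an involution with a $0$-dimensional mirror. Since $R$ is an involution in $G_2$, Lemma~\ref{lambda1} yields a regular complex $\K^{\lambda_0}$ with the same vertex set and edge set as $\K$ and a flag-transitive symmetry group containing $G$. If the inclusions (\ref{klambda}) are all equalities, then $\K^{\lambda_0}$ is simply flag-transitive with distinguished generators $(R_0R,R_1,G_2)$, and its mirror vector is $(0,k)$ since $\dim(R_0R)=0$ and $R_1$ is unchanged; otherwise, $\K^{\lambda_0}$ has face mirrors.

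Finally, to verify $\K = (\K^{\lambda_0})^{\lambda_0}$, I would reapply $\lambda_0$ with the same half-turn $R$, which still lies in $G_2(\K)\subseteq G_2(\K^{\lambda_0})$. Following the convention stated in Section~\ref{terba}, one bases the inverse operation on the distinguished involution $R_0R \in G_0(\K^{\lambda_0})$, and then $(R_0R)R = R_0R^2 = R_0$ recovers the original generator. The resulting generating data $(R_0,R_1,G_2)$ is precisely the data producing $\K$ via Wythoff's construction, giving $(\K^{\lambda_0})^{\lambda_0}=\K$ both when $\K^{\lambda_0}$ is simply flag-transitive and when it has face mirrors.
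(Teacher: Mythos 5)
Your overall skeleton is right, and your identification of the mirror of $R_0$ as the perpendicular bisector plane of $F_1$ (so that $R_0R$ is the point reflection in the midpoint of $F_1$, giving mirror vector $(0,k)$ in the simply flag-transitive case) matches what is needed. Note that the paper itself does not reprove this lemma from scratch: the first assertions are imported from \cite[Lemmas 5.1--5.5]{pelsch}, and only the final equality $\K=(\K^{\lambda_0})^{\lambda_0}$ is argued in the paragraph following the lemmas. Measured against that, your proposal has two genuine gaps.

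First, the existence of the half-turn in $G_2$ is the substantive claim of the first sentence, and you leave it as an unexecuted case split. A priori $G_2$ could be $C_3$ or $D_3$ (rotation part $C_3$), which contain no half-turn, and ``the classification of finite subgroups of $\mathcal{O}(3)$'' does not rule this out: $\langle R_0\rangle\times C_3\cong C_6$ is a perfectly good finite subgroup of $\mathcal{O}(3)$. What does the work is the crystallographic restriction on the special group $G_*$: since $R_0'$ is a plane reflection, $G_*$ is $[3,3]$, $[3,3]^*$ or $[3,4]$, and in none of these is a reflection mirror perpendicular to a $3$-fold axis. Since the mirror of $R_0'$ is perpendicular to $L_2'$ (your step one), $L_2'$ cannot be a $3$-fold axis, so the rotation subgroup of $G_2$ is a nontrivial (because $r\geq 3$) subgroup of $C_2$ or $C_4$ and hence contains the half-turn. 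Without invoking $G_*$ and irreducibility, your local analysis at the midpoint $m$ cannot close this case.

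Second, your verification of $\K=(\K^{\lambda_0})^{\lambda_0}$ in the face-mirror case is too quick. When $\K^{\lambda_0}$ has face mirrors, $G_1(\K^{\lambda_0})$ and $G_2(\K^{\lambda_0})$ (and possibly the whole symmetry group) are strictly larger than $\langle R_1\rangle$ and $G_2(\K)$, and the paper explicitly warns that the inverse operation then only produces a regular complex \emph{containing} $\K$ as a possibly proper subcomplex; recovering ``precisely the data producing $\K$'' is exactly what must be proved, not assumed. The paper's argument is geometric: with mirror vector $(2,k)$ the faces of $\K$ are planar, the base face of $\K^{\lambda_0}$ is determined by $\langle R_0R,R_1\rangle$ and so lies in the plane of the base face of $\K$, hence the face mirrors of $\K^{\lambda_0}$ are the affine hulls of the faces of $\K$, and from this the equality follows. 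You need some version of this (or Lemma~\ref{facemirlambda02}-type reasoning) to rule out a strictly larger complex after the double application.
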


\begin{lemma}
\label{apptwo}
Let $\K$ be an infinite simply flag-transitive regular complex with an affinely irreducible group $G$, a dihedral subgroup $G_2$, and mirror vector $(0, k)$ for some $k = 1, 2$. Then, for any plane reflection $R \in G_2$, the corresponding complex $\K^{\lambda_0}$, with $\lambda_0 = \lambda_0(R)$, is a regular complex which either has face mirrors or is simply flag-transitive with mirror vector $(1, k)$; in either case, $\K = (\K^{\lambda_0})^{\lambda_0}$.
\end{lemma}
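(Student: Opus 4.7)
The plan is to verify the hypothesis of Lemma~\ref{lambda1} for the chosen $R$, identify the dimension of the new generator to pin down the mirror vector, and then use the involutivity of $\lambda_0(R)$ at the level of generators to close the loop $\K=(\K^{\lambda_0})^{\lambda_0}$.

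First I would establish the crucial geometric fact that $R_0 R$ is an involution. Because $G_2=G_{\{F_0,F_1\}}$ fixes the single vertex $F_0$ and cannot swap the endpoints of $F_1$ (doing so would move $F_0$), every element of $G_2$ fixes both endpoints of $F_1$, and so fixes the line carrying $F_1$ pointwise. Hence the mirror plane of any plane reflection $R\in G_2$ contains this line, and in particular contains the midpoint of $F_1$, which is precisely the center of the point reflection $R_0$. Consequently $R$ and $R_0$ commute, and their product is a half-turn about the line through the midpoint of $F_1$ perpendicular to the mirror of $R$; in particular $R_0R$ is an involution of mirror dimension $1$.

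Now I would invoke Lemma~\ref{lambda1} directly with this $R$. It produces a regular complex $\K^{\lambda_0}$ sharing the vertex- and edge-sets of $\K$, with symmetry group containing $G$. By the dichotomy in that lemma, either the inclusions in (\ref{klambda}) are equalities, in which case $\K^{\lambda_0}$ is simply flag-transitive with generators $R_0R$, $R_1$ and subgroup $G_2$, or $\K^{\lambda_0}$ is not simply flag-transitive and so (by the characterization recalled in Section~\ref{terba}) it must have face mirrors. In the simply flag-transitive case, $G_0(\K^{\lambda_0})=\langle R_0R\rangle$ is generated by a half-turn while $G_1(\K^{\lambda_0})=\langle R_1\rangle$ retains mirror dimension $k$, so the mirror vector of $\K^{\lambda_0}$ is exactly $(1,k)$.

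Finally, the identity $\K=(\K^{\lambda_0})^{\lambda_0}$ follows from $R^2=1$. If $\K^{\lambda_0}$ is simply flag-transitive, then $R\in G_2(\K^{\lambda_0})=G_2$ and $(R_0R)R=R_0$ is an involution, so re-applying $\lambda_0(R)$ to the triple $(R_0R, R_1, G_2)$ returns the triple $(R_0, R_1, G_2)$; Wythoff's construction on these original data recovers $\K$ itself. If instead $\K^{\lambda_0}$ has face mirrors, the same algebraic identity holds under the convention fixed before Lemma~\ref{lambda1}: one bases the inverse operation on the particular element $R_0R$ of $G_0(\K^{\lambda_0})$, and multiplying by $R$ again yields $R_0$, so $\K=(\K^{\lambda_0})^{\lambda_0}$ holds in the prescribed sense. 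The main obstacle is really the opening geometric step: without the incidence of the mirror of $R$ with the center of $R_0$ forced by mirror vector $(0,k)$ together with $R\in G_2$, the product $R_0R$ need not even be an involution, and Lemma~\ref{lambda1} could not be applied; once this incidence is in hand, the remainder is a careful bookkeeping of generators.
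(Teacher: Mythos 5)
Your opening geometric step is right, and it is the essential one: because $G_2$ fixes the line through $F_1$ pointwise, the mirror of any plane reflection $R\in G_2$ contains the centre of the point reflection $R_0$, so $R_0R$ is a half-turn; Lemma~\ref{lambda1} then applies, and the dichotomy together with the identification of the mirror vector $(1,k)$ in the simply flag-transitive case is handled correctly. The gap is in your final step, specifically in the case where $\K^{\lambda_0}$ has face mirrors. There you claim that $\K=(\K^{\lambda_0})^{\lambda_0}$ follows from $(R_0R)R=R_0$ together with the convention of basing the inverse operation on the element $R_0R$. But the convention alone does not yield the equality: as the discussion in Section~\ref{terba} makes explicit, when $\K^{\lambda_0}$ has face mirrors its symmetry group may strictly contain $G$, and the subgroups $G_1(\K^{\lambda_0})$ and $G_2(\K^{\lambda_0})$ that enter the second application of $\lambda_0$ may strictly contain $\langle R_1\rangle$ and $G_2(\K)$. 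Wythoff's construction applied to the new data is then only guaranteed to produce a regular complex \emph{containing} $\K$ as a possibly proper subcomplex; recovering the single generator $R_0$ does not by itself rule out extra faces.

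What closes this gap is a geometric input your argument never uses. Under the hypothesis of mirror vector $(0,k)$ one has $R_0'=-I$ in the special group, hence $(R_0'R_1')^2=I$, so the faces of $\K$ are planar zigzags. Since the base face of $\K^{\lambda_0}$ is determined by $\langle R_0R,R_1\rangle$ and lies in the same plane as the base face of $\K$, any face mirrors acquired by $\K^{\lambda_0}$ are necessarily the affine hulls of the faces of $\K$; from this one concludes that the double application of $\lambda_0$ based on the conventional generator returns exactly $\K$ and not a proper supercomplex. This is precisely the argument given in the paragraph following Lemma~\ref{appthree}, and without it (or an equivalent substitute) the final clause $\K=(\K^{\lambda_0})^{\lambda_0}$ remains unproved in the face-mirror case.
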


\begin{lemma}
\label{appthree}
Let $\K$ be an infinite simply flag-transitive regular complex with an affinely irreducible group $G$ and mirror vector $(k, 1)$ for some $k = 0, 1, 2$. Assume also that $G_2$ contains a plane reflection $R$ whose mirror contains the axis of the half-turn $R_1$. Then the corresponding complex $\K^{\lambda_1}$, with $\lambda_1 = \lambda_1(R)$, is a simply flag-transitive regular complex with mirror vector $(k, 2)$. In particular, $\K = (\K^{\lambda_1})^{\lambda_1}$.
\end{lemma}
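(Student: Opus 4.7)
The plan is to apply Lemma~\ref{lambda2} with the given element $R$ and then read off the dimensions of the new distinguished generators.

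The first task is to check the hypothesis of Lemma~\ref{lambda2}, namely that $R_{1}R$ is an involution. Let $\ell$ be the axis of the half-turn $R_{1}$ and $\Pi$ the mirror of the plane reflection $R$; by assumption $\ell\subseteq\Pi$, so $R$ pointwise fixes $\ell$. Conjugation then gives $RR_{1}R^{-1}$ equal to the half-turn about $R(\ell)=\ell$, which is $R_{1}$ itself. Hence $R$ and $R_{1}$ commute, and since both are involutions, so is their product $R_{1}R$. Lemma~\ref{lambda2} now produces a simply flag-transitive regular complex $\K^{\lambda_{1}}$ with the same vertex- and edge-set as $\K$, the same symmetry group $G$, and distinguished subgroups $G_{0}(\K^{\lambda_{1}})=\langle R_{0}\rangle$, $G_{1}(\K^{\lambda_{1}})=\langle R_{1}R\rangle$, and $G_{2}(\K^{\lambda_{1}})=G_{2}$.

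Next I would identify $R_{1}R$ geometrically. Since $R$ and $R_{1}$ commute and both are involutions, their product is the reflection in the plane through $\ell$ perpendicular to $\Pi$; in particular $\dim(R_{1}R)=2$. Because $\dim(R_{0})=k$ is unchanged, the mirror vector of $\K^{\lambda_{1}}$ is $(k,2)$, as required.

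Finally, for the involutivity statement $\K=(\K^{\lambda_{1}})^{\lambda_{1}}$, I would note that $R$ is an involution (so $R^{-1}=R$) and still lies in $G_{2}(\K^{\lambda_{1}})=G_{2}$ with its mirror containing the axis of $R_{1}$. Applying $\lambda_{1}(R)$ to $\K^{\lambda_{1}}$ is therefore legitimate, and it replaces the generator $R_{1}R$ of $G_{1}(\K^{\lambda_{1}})$ by $(R_{1}R)R=R_{1}$, while leaving $R_{0}$ and $G_{2}$ fixed; Wythoff's construction from these generators then returns $\K$. The only nontrivial step in the argument is the commutativity of $R$ and $R_{1}$ forced by $\ell\subseteq\Pi$, which is what makes both the forward operation and its inverse well-defined.
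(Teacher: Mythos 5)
Your argument is correct and is exactly the intended one: the paper itself gives no proof of this lemma but defers to \cite[Lemmas 5.1--5.5]{pelsch}, and the expected proof is precisely your reduction to Lemma~\ref{lambda2} after observing that $\ell\subseteq\Pi$ forces $R$ and $R_1$ to commute, that $R_{1}R$ is then the reflection in the plane through $\ell$ perpendicular to $\Pi$ (hence $\dim(R_{1}R)=2$), and that a second application of $\lambda_{1}(R)$ restores the original generating data. The only point worth making explicit is the final step that identical distinguished generators, base vertex, and group yield the same complex via Wythoff's construction, but this is used implicitly throughout the paper and is unproblematic here since $\K^{\lambda_1}$ is simply flag-transitive, so no face-mirror convention issues arise.
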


As mentioned earlier, the symmetry group of a simply flag-transitive regular complex $\K$ may be only a proper subgroup of the symmetry group of the new complex $\K^{\lambda_0}$. Clearly, this can only occur if $\K^{\lambda_0}$ itself is not simply flag-transitive. Now under the assumptions of Lemmas \ref{appone} and \ref{apptwo}, the given simply flag-transitive complex $\K$ must necessarily have planar faces. Hence, if $\K^{\lambda_0}$ acquires face mirrors, then these face mirrors must necessarily be the affine hulls of the faces of $\K$; bear in mind here that the geometry of the base face of $\K^{\lambda_0}$ is entirely determined by the subgroup $\langle R_{0}R,R_1\rangle$, and that therefore this base face lies in the same plane as the base face of $\K$. The equality $\K = (\K^{\lambda_0})^{\lambda_0}$ at the end of Lemmas \ref{appone} and \ref{apptwo} follows from this argument. 

When the new $\K^{\lambda_0}$ is not simply flag-transitive, the question arises whether or not the reflective symmetries in the face mirrors of $\K^{\lambda_0}$ are also symmetries of~$\K$. Here $\K^{\lambda_0}$ has a strictly larger symmetry group than $\K$ precisely when the face mirrors of $\K^{\lambda_0}$ are {\em not\/} face mirrors of $\K$ (that is, when the reflective symmetries in the face mirrors of $\K^{\lambda_0}$ are not symmetries of~$\K$). For example, the $2$-skeleton of the regular $4$-apeirotope $\apeir \{3,4\}$, viewed as the complex $\K_{4}(1,2)^{\lambda_0}$ as described in Section~\ref{k02}, has a strictly larger symmetry group than the original (simply flag-transitive) complex $\K_{4}(1,2)$. 
\smallskip

The symmetry groups of regular complexes are crystallographic groups. Recall that the {\em special group\/} $G_*$ of a crystallographic group $G$ is the image of $G$ under the epimorphism $\mathcal{I}(3)\mapsto \mathcal{O}(3)$ whose kernel is $\mathcal{T}(\E)$; here $\mathcal{I}(3)$, $\mathcal{O}(3)$, and $\mathcal{T}(\E)$, respectively, are the euclidean isometry group, orthogonal group, and translation group of $\E$.  Then $G_*$ is finite and
$G_*= G/(G \cap \mathcal{T}(\E)) = G/T(G)$, where $T(G)$ is the full translation subgroup of $G$ (which may be identified with a lattice in $\E$). More explicitly, if $R: x \mapsto xR' + t$ is any element of $G$, with $R' \in \mathcal{O}(3)$ and $t \in \E$, then $R'$ lies in $G_*$; conversely, all elements of $G_*$ are obtained in this way from elements in $G$.
\smallskip

Let $a$ be a positive real number, let $k=1$, $2$ or $3$, and let ${\bf a} : = (a^k,0^{3-k})$, the vector with $k$ components $a$ and $3-k$ components $0$. Following \cite[p.166]{arp}, we write $\Lambda_{\bf a}$ for the sublattice of $a\mathbb{Z}^3$ generated by $\bf a$ and its images under permutation and changes of sign of coordinates. Then $\Lambda_{(1,0,0)}=\mathbb{Z}^{3}$ is the standard {\em cubic lattice\/}; $\Lambda_{(1,1,0)}$ is the {\em face-centered cubic lattice\/} consisting of all integral vectors with even coordinate sum; and $\Lambda_{(1,1,1)}$ is the {\em body-centered cubic\/} lattice.

The geometry of a number of regular complexes described later can best be described in terms of the semiregular tessellation $\mathcal{S}$ of $\E$ by regular tetrahedra and octahedra constructed as follows (see \cite{coxsr,gruni,jo}). Let $Q$ denote the regular octahedron with vertices $(\pm a,0,0)$, $(0,\pm a,0)$, $(0,0,\pm a)$, and let $\mathcal{Q}$ denote the family of octahedra $u+Q$ centered at the points $u$ in $a\mathbb{Z}^3$ not in $\Lambda_{(a,a,0)}$. Then the complement in $\E$ of the octahedra in $\mathcal{Q}$ gives rise to a family $\mathcal{R}$ of regular tetrahedra each inscribed in a cube of the cubical tessellation with vertex-set $a\mathbb{Z}^3$; each such cube $C$ contributes just one tetrahedron $T_C$ to $\mathcal{R}$, and the tetrahedra in adjacent cubes share an edge with $T_C$. The family $\mathcal{Q}\cup\mathcal{R}$ of octahedra and tetrahedra consists of the tiles in a tessellation $\mathcal{S}$ of $\E$; this tessellation is {\em semiregular\/}, meaning that its tiles are Platonic solids and the symmetry group of $\mathcal{S}$ acts transitively on the vertices of $\mathcal{S}$. The faces of $\mathcal{S}$ are regular triangles, each a face of one octahedron and one tetrahedron.
\smallskip

Concluding this section we record the following simple lemma without proof.

\begin{lemma}
\label{cubelemma}
Let $C$ be a cube, and let $R,R',R''$ be symmetries of $C$ such that $R$ and $R'$ are plane reflections and $R''$ is a half-turn. Suppose one of the following conditions applies: the mirrors of $R$ and $R'$ are determined by the two diagonals of a face $F$ of $C$, and $R''$ is a half-turn whose axis passes through the midpoint of an edge of $F$; or $R$ and $R'$ are the two reflections leaving an edge $E$ of $C$ invariant, and $R''$ is a half-turn whose axis passes through the midpoint of an edge adjacent to $E$. Then $R,R',R''$ generate the full octahedral group $G(C)=[3,4]$.
\end{lemma}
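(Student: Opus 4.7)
The plan is a direct coordinate calculation. Place $C$ as $[-1,1]^{3}$; its full symmetry group $[3,4]$ of order $48$ is the hyperoctahedral group $(C_{2})^{3}\rtimes S_{3}$ of signed permutations of the coordinate axes. In each case the Klein $4$-group $\langle R, R'\rangle$ acts transitively on the set of admissible axes for $R''$ (the four edges of $F$ in case~(a); the four edges of $C$ adjacent to $E$ in case~(b)), so any two admissible choices of $R''$ differ by conjugation inside $\langle R, R'\rangle$ and yield the same subgroup $\langle R, R', R''\rangle$. It therefore suffices to treat one canonical choice per case. In each case the strategy will be to exhibit inside $\langle R, R', R''\rangle$ the full subgroup $(C_{2})^{3}$ of sign changes together with enough permutations to surject onto $S_{3}$.

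For case~(a), take $F$ to be the top face $z=1$ and place the axis of $R''$ through $(0,1,1)$ and $(0,-1,-1)$, so that $R\colon(x,y,z)\mapsto(y,x,z)$, $R'\colon(x,y,z)\mapsto(-y,-x,z)$, and $R''\colon(x,y,z)\mapsto(-x,z,y)$. A direct computation yields that $RR'$ is the half-turn about the $z$-axis (sign change $(-,-,+)$), $(RR'R'')^{2}$ is the half-turn about the $x$-axis (sign change $(+,-,-)$), and $(RR'')^{3}=-I$ is the central inversion (sign change $(-,-,-)$); products of these three already cover all eight sign changes in $(C_{2})^{3}$. Since the permutation parts of $R$ and $R''$ are the transpositions $(1\,2)$ and $(2\,3)$, the image in $S_{3}$ is all of $S_{3}$, so $\langle R, R', R''\rangle=[3,4]$.

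For case~(b), take $E$ to be the edge from $(1,1,-1)$ to $(1,1,1)$ and again place the axis of $R''$ through $(0,1,1)$ and $(0,-1,-1)$, so that $R$, $R'$, $R''$ become $(x,y,z)\mapsto(x,y,-z)$, $(x,y,z)\mapsto(y,x,z)$, $(x,y,z)\mapsto(-x,z,y)$. Here the crucial observation is that $g:=RR'R''\colon(x,y,z)\mapsto(z,-x,-y)$ is a $3$-fold rotation about the diagonal through $(1,-1,1)$, whose conjugation action cyclically permutes the three coordinate axes. Hence $R$, $gRg^{-1}$, $g^{2}Rg^{-2}$ are the reflections in the planes $z=0$, $x=0$, $y=0$, jointly generating $(C_{2})^{3}$, while $R'$ together with its $g$-conjugates projects onto all three coordinate transpositions in $S_{3}$, so once again $\langle R, R', R''\rangle=[3,4]$. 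The only obstacle in the whole argument is the bookkeeping of the matrix products; nothing conceptual is involved.
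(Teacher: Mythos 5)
The paper explicitly records Lemma~\ref{cubelemma} ``without proof,'' so there is no argument of the authors' own to compare yours against; your proposal supplies a proof where the paper offers none. I checked your computations and they are correct. In case~(a) one indeed gets $RR'=(x,y,z)\mapsto(-x,-y,z)$, $(RR'R'')^{2}=(x,y,z)\mapsto(x,-y,-z)$ and $(RR'')^{3}=-I$, and these three sign changes span all of $(C_{2})^{3}$, while the permutation parts $(1\,2)$ of $R$ and $(2\,3)$ of $R''$ generate $S_{3}$; in case~(b) the element $g=RR'R''$ is a $3$-fold rotation about the diagonal through $(1,-1,1)$ whose conjugates of $R$ are the three coordinate-plane reflections, and the $g$-conjugates of $R'$ hit all transpositions. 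The preliminary reduction is also sound: the Klein four-group $\langle R,R'\rangle$ does act transitively on the four admissible edge midpoints in each case (e.g.\ in case~(a) the orbit of $(0,1,1)$ under $\{1,R,R',RR'\}$ is $\{(0,1,1),(1,0,1),(-1,0,1),(0,-1,1)\}$), the half-turn with axis through a given edge midpoint is unique in $[3,4]$, and conjugating a generator by an element already in the subgroup does not change the generated subgroup, so treating one canonical $R''$ per case suffices. The only thing worth making explicit is the (standard) observation that the symmetry group of the cube is transitive on faces and on edges, which justifies fixing $F$ and $E$ as you do.
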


\section{Complexes with mirror vector $(1, 1)$}
\label{mirr11}

In this section we enumerate the infinite simply flag-transitive regular complexes with mirror vector $(1, 1)$, exploiting Lemma~\ref{appthree} and drawing on the enumeration of the regular complexes with mirror vector $(1,2)$ in \cite{pelsch}. As we shall see, all have helical faces. We will work again under the assumption that the symmetry group is affinely irreducible. It is known that there are exactly six regular complexes of this kind which are polyhedra, all with helices as faces:\ in the notation of \cite[Section 7E]{arp} these are $\{\infty,3\}^{(a)}$, $\{\infty,4\}_{\cdot,\star 3}$, $\{\infty,3\}^{(b)}$ with complete mirror vector $(1,1,1)$ and $\{\infty, 6\}_{4,4}$, $\{\infty, 4\}_{6,4}$, $\{\infty, 6\}_{6,3}$ with complete mirror vector $(1,1,2)$. We generally take the enumeration of the regular polyhedra for granted and concentrate on the complexes which are not polyhedra.

For the sake of simplicity, whenever we claim uniqueness for a choice of certain elements within a group (or its special group) or of mirrors of such elements, we will usually omit any qualifying statements such as ``up to conjugacy'' or ``up to congruence''. Throughout, these qualifications are understood.

Throughout, let $\K$ be an infinite simply flag-transitive regular complex with mirror vector $(1,1)$ and an affinely irreducible symmetry group $G = \langle R_0, R_1, G_2 \rangle$, where the subgroup $G_2$ has order $r\geq 3$.

The isometries $R_0$ and $R_1$ are half-turns whose axes, $L_0$ and $L_1$ respectively, either intersect at the center of the base face $F_2$ if $\K$ has finite faces, or do not intersect at all if $\K$ has infinite faces. Their product $R_0 R_1$ is a twist, with a trivial or non-trivial translation component, whose invariant line $L_3$ is perpendicular
to the axes of $R_0$ and $R_1$. The subgroup $G_2$ fixes the line $L_2$ through $F_1$ pointwise, and the generator $S$ of its rotation subgroup is a rotation about $L_2$. Note here that $L_0$ is perpendicular to $L_2$. On the other hand, $L_2$ cannot be perpendicular to $L_1$ or parallel to $L_3$, since otherwise $F_2$ would necessarily have to be a linear apeirogon. For the same reason, $L_0$ and $L_1$ are not parallel. Moreover, since $R_1$ does not fix $F_1$, its axis $L_1$ cannot coincide with $L_2$.

It is convenient to assume that $o$ is the base vertex of $\K$. Then $o$ is fixed by $R_1$ and each element of $G_2$. It follows that the vertex-figure group $\langle R_1, G_2 \rangle$ of $\K$ at $o$ is a (possibly proper) subgroup of the special group $G_*$. Recall that, if $R$ is any element of $G$ or $L$ is any line, we let $R'$ denote the image of $R$ in $G_*$ and $L'$ the translate of $L$ through~$o$.

The special group $G_*$ is a finite irreducible crystallographic subgroup of $\mathcal{O}(3)$ that contains the three distinct rotations $R_0'$, $R_1$ and $S$, whose axes $L_0'$, $L_1$ and $L_2$ are positioned in such a way that $L_2$ is perpendicular to $L_0'$ but not to $L_1$. This immediately rules out the groups $[3, 3]^+$, $[3, 3]^*$ and $[3, 3]$ as special groups of $G$. In fact, the rotation subgroup of these groups is $[3, 3]^+$ in each case; however, then the axis of a non-involutory rotation like $S$ could not be perpendicular to the axis of an involutory rotation like $R_{0}'$. Hence $G_*$ is either $[3, 4]^+$ if $G_2$ is cyclic, or $[3, 4]$ if $G_2$ is dihedral.

We now proceed to determine the regular complexes with mirror vector $(1, 1)$. As the reference figure for the action of $G_*$ we take the cube $C$ with vertices $(\pm 1, \pm 1, \pm 1)$.\\[-.3in]

\subsection{The four complexes derived through $\lambda_1$}
\label{lam}

First we employ the operation $\lambda_1$ described in (\ref{optwo}) and Lemma~\ref{appthree} to obtain those complexes for which the axis of $R_1$ is contained in a mirror of a plane reflection in $G_2$ (this corresponds to case (A) in \cite[\S 6.1]{pelsch}). According to Lemma~\ref{appthree} applied with $k=1$, we need to apply $\lambda_1$ to those regular, simply flag-transitive complexes with mirror vector $(1, 2)$ for which the mirror of the corresponding plane reflection $R_1$ is perpendicular to the mirror of a plane reflection in the corresponding group $G_2$. Hence, using the enumeration of \cite[Sections 5.2, 6.2]{pelsch} and in particular the notation of equations (6.3), (6.5), (6.6) and (6.7) of \cite{pelsch}, we arrive at the regular complexes\\[-.25in]
\begin{equation}
\label{thekones}
\begin{array}{rl}
\K_1(1, 1) \;:=&\!\! \K_3(1, 2)^{\,\lambda_1 (\hat{R}_2 \widetilde{R}_2 \hat{R}_2)},\\
\K_2(1, 1) \;:=&\!\! \K_5(1, 2)^{\,\lambda_1 (\widetilde{R}_2)},\\
\K_3(1, 1) \;:=&\!\! \K_6(1, 2)^{\,\lambda_1 (R_2 \hat{R}_2 R_2)},\\
\K_4(1, 1) \;:=&\!\! \K_7(1, 2)^{\,\lambda_1 (\hat{R}_2)},
\end{array}
\end{equation}
all with mirror vector $(1,1)$ and with special group $[3, 4]$. (Recall our convention to label regular complexes with their mirror vectors.)  Each new complex has the same twin vertex and the same vertex-figure group (although with new generators) as the original complex, so its edge graph (and in particular, its vertex-set) must be the same as that of the original complex. (Recall from \cite{pelsch} that the vertex $v$ of the base edge distinct from the base vertex is called the {\em twin vertex\/} of the complex.)  Similarly, since the element of $G$ that defines $\lambda_1$ belongs to $G_2$ and hence stabilizes the base vertex of the vertex-figure at $o$, the vertex-figure itself remains unchanged under $\lambda_1$, so that the two complexes always have the same vertex-figure at~$o$. Moreover, the (dihedral) subgroup $G_2$ and hence the parameter $r$ remain the same under the operation. The finer geometry of these complexes can be described as follows.

The vertex-set of $\K_1(1, 1)$ is $\Lambda_{(a, a, a)}$. The faces are helices over triangles and their axes are parallel to the diagonals of $C$. There are six helical faces around each edge, permuted under a dihedral group $G_2=D_3$; thus $r=6$. Note that every edge $e$ of $\K_1(1, 1)$ is a main diagonal of a cube $C_e$ in the cubical tessellation with vertex-set $a\mathbb{Z}^3$. With this in mind, for any three, but no four, consecutive edges, $e,f,g$ (say), of any helical face, the three corresponding cubes $C_e$, $C_f$ and $C_g$ share an edge whose vertices are not vertices of this helical face. Moreover, for any four consecutive edges $e,f,g,h$ of a helical face, the two edges shared by $C_e$, $C_f$, $C_g$ and $C_f$, $C_g$, $C_h$, respectively, are adjacent edges (of a square face) of $C_f$. Each of the six helical faces of $\K_1(1,1)$ around an edge $e$ with vertices $u,v$ is now determined by one of the six edges of $C_e$ that do not contain $u$ or $v$. The vertex-figure of $\K_1(1, 1)$ at $o$ coincides with the vertex-figure of $\K_3(1, 2)$ at $o$, and hence is the double-edge graph of the cube with vertices $(\pm a, \pm a, \pm a)$. The vertex-figure group is $[3, 4]$.

The vertex-set of $\K_2(1, 1)$ is $a\mathbb{Z}^3 \setminus ((0, 0, a) + \Lambda_{(a, a, a)})$ and the faces again are helices over triangles with their axes parallel to the diagonals of $C$. Now the faces are those Petrie polygons of the cubical tessellation of $\E$ with vertex-set $a\mathbb{Z}^3$ that have no vertex in $(0, 0, a) + \Lambda_{(a, a, a)}$; thus any two, but no three, consecutive edges belong to the same square face, and any three, but no four, consecutive edges belong to the same cubical tile, of the cubical tessellation.  There are four helical faces around an edge of $\K_2(1,1)$, so $r=4$ (and $G_2=D_2$). As for the original complex $\K_5(1, 2)$, the vertex-figure of $\K_2(1, 1)$ at $o$ is the (planar) double-edge graph of the square with vertices $(\pm a, 0, 0)$ and $(0, \pm a, 0)$, and the vertex-figure group is $[4, 2] \cong D_4 \times C_2$.

The vertex-set of $\K_3(1, 1)$ is $a\mathbb{Z}^3$ and the faces again are helices over triangles with their axes parallel to the diagonals of $C$. Now the faces are all the Petrie polygons of the cubical tessellation of $\E$ with vertex-set $a\mathbb{Z}^3$, so $\K_3(1,1)$ contains $\K_2(1, 1)$ as a subcomplex. There are eight helical faces around an edge, so $r=8$ (and $G_2=D_4$). The vertex-figure of $\K_3(1, 1)$ at $o$ is the double-edge graph of the octahedron with vertices $(\pm a, 0, 0), (0, \pm a, 0), (0, 0, \pm a)$, and the vertex-figure group is $[3, 4]$.

The vertex-set of $\K_4(1, 1)$ is $\Lambda_{(2a, 2a, 0)} \cup ((a, -a, a) + \Lambda_{(2a, 2a, 0)})$, and the edges are main diagonals of cubes of the cubical tessellation of $\E$ with vertex-set $a \mathbb{Z}^3$. Now the faces are helices over squares with their axes parallel to the coordinate axes; in particular, the axis of the base face $F_2$ is parallel to the $y$-axis and the projection of $F_2$ along its axis onto the $xz$-plane is the square with vertices $(0, 0, 0)$, $(a, 0, a)$, $(0, 0, 2a)$ and $(-a, 0, a)$. Each edge belongs to six helical faces (that is, $r=6$ and $G_2=D_3$), and these have the property that each coordinate direction of $\E$ occurs exactly twice among the directions of their axes. The vertex-figure of $\K_4(1, 1)$ at $o$ is the double-edge graph of the tetrahedron with vertices $(a, -a, a)$, $(-a, a, a)$, $(a, a, -a)$, $(-a, -a, -a)$. The vertex-figure group is $[3, 3]$. Note that the common edge graph of $\K_4(1,1)$ and $\K_7(1,2)$ is the famous {\em diamond net\/} modeling the diamond crystal (see \cite{pelsch}, as well as \cite[p. 241]{arp} and \cite[pp. 117,118]{wells}).

\subsection{The five complexes not derived through $\lambda_1$}
\label{notlam}

Next we enumerate the regular complexes with mirror vector $(1,1)$ for which either $G_2$ is cyclic, or $G_2$ is dihedral and the axis of $R_1$ is not contained in a mirror of a plane reflection in $G_2$. Now we cannot apply any of the operations $\lambda_0$ and $\lambda_1$ but instead must deal with the geometry directly. Recall that $L_3$ and $L_{3}'$ denote the axes of $R_0 R_1$ and $R_0'R_1$, respectively. We break our discussion into three cases, I, II and III respectively, according as $L_3'$ is a coordinate axis, $L_3'$ is parallel to a face diagonal of $C$, or $L_3'$ is parallel to a main diagonal of $C$. (Recall here that $G_*=[3,4]^+$ or $[3,4]$.) In each case there is just one choice for $L_{3}'$ (up to conjugacy), namely the line through $o$ with direction vector $(1,0,0)$, $(0, 1, 1)$, or $(1, 1, 1)$ (say), respectively.

\medskip
\noindent{\bf Case I: $L_3'$ is a coordinate axis}
\medskip

Suppose $R_0'R_1$ is a rotation whose axis $L_3'$ is the $x$-axis. Then there are two possible choices for each of the rotation axes $L_0'$ of $R_0'$ and $L_1$ of $R_1$ (perpendicular to $L_3'$), namely a coordinate axis or a line through the midpoints of a pair of antipodal edges of $C$.  If $L_0'$ and $L_1$ are perpendicular, then $R_0' R_1$ must be a half-turn and the faces of $\K$ must be zigzags. However, as we shall see, this case will not actually occur under our assumptions. On the other hand, if $L_0'$ and $L_1$ are inclined at an angle $\pi/4$, then $R_0' R_1$ is a $4$-fold rotation and the faces of $\K$ are helices over squares.

\medskip
\noindent{\em Case Ia: $L_0'$ and $L_1$ both are coordinate axes}
\medskip

We can rule out this possibility on the following grounds. Suppose $L_0'$ is the $y$-axis and $L_1$ is the $z$-axis. Since the rotation axis $L_2$ of $S$ must be orthogonal to $L_0'$ but not to $L_1$, the only possible choice for $L_2$ is the line through $o$ and $(1, 0, 1)$. This immediately implies that $S$ is a half-turn and that $G_2$ is the dihedral group generated by the reflections in the $xz$-plane and the plane $x=z$. (Bear in mind that $r\geq 3$.) However, the $xz$-plane is invariant under $R_0'$, $R_1$ and $G_2$, and hence under all of $G_*$, contradicting our assumption of irreducibility of $G$. Therefore this case cannot occur.

\medskip
\noindent{\em Case Ib: $L_0'$ is a coordinate axis and $L_1$ is parallel to a face diagonal of $C$}
\medskip

Suppose $L_0'$ is the $y$-axis and $L_1$ is the line through $o$ and $(0, 1, 1)$. Now there are two possibilities for the rotation axis $L_2$ for $S$, namely the $z$-axis or the line through $o$ and $(1, 0, 1)$.

We first eliminate the possibility that $L_2$ is the $z$-axis. When $L_2$ is the $z$-axis, the group $G_2$ can be cyclic of order $4$ or dihedral. We first rule out the latter possibility as follows. Bear in mind that $L_1$ does not lie in the mirror of a plane reflection in $G_2$. Now if $G_2$ is dihedral, then it cannot contain the reflection in the $yz$-plane and must necessarily have order $4$ and be generated by the reflections in the planes $x=y$ and $x=-y$. However, then Lemma~\ref{cubelemma} shows that the generators $R_1$ and $G_2$ of the vertex-figure group must already generate the full special group, $[3, 4]$, implying that $G_2$ must actually have order $8$, contradicting our earlier claim. Thus $G_2$ cannot be dihedral.
%(case 8 of my notes)

Next we consider the possibility that $G_2$ is cyclic of order $4$ (and $L_2$ is the $z$-axis). Since $L_0'$, $L_1$ and $L_2$ (and hence $F_1$) are coplanar, $L_0$ and $L_1$ are also coplanar and intersect at an angle $\pi/4$. Therefore the base face $F_2$ of $\K$ must be a planar square. Since now all generators of $G$ are rotations, $G$ consists only of proper (orientation preserving) isometries. This suggests that $G$ is the even subgroup (of all proper isometries) of the symmetry group of the cubical tessellation in $\E$. This can indeed be verified by the following argument (or alternatively by Wythoff's construction). Let $T_0$, $T_1$, $T_2$, $T_3$ denote the distinguished plane reflections generating the symmetry group $[4,3,4]$ of the cubical tessellation $\{4,3,4\}$ of $\E$, chosen in such a way that $T_0 T_3$, $T_1 T_3$ and $T_2 T_3$ coincide with $R_0$, $R_1$ and $S$, respectively. Since these three elements generate the rotation subgroup of $[4,3,4]$ we conclude that $\K$ would have to coincide with the $2$-skeleton of $\{4, 3, 4\}$, which is impossible as $\K$ is simply flag-transitive. Thus $G_2$ cannot be cyclic of order $4$, completing our argument that in Case Ib the rotation axis $L_2$ of $S$ cannot be the $z$-axis.%(Case 1 of my notes)

We now analyze the case when $L_2$ is the line through $o$ and $(1, 0, 1)$. In this case the twin vertex has the form $(a, 0, a)$ for some $a \ne 0$. Since $r\geq 3$, the group $G_2$ must necessarily be dihedral of order $4$, generated by the reflections $R_2$ in the plane $x=z$ and $\hat{R}_2$ in the $xz$-plane. Then $G$ has generators $R_0$, $R_1$, $R_2$ and $\hat{R}_2$ given by
\begin{equation}
\label{genk511}
\begin{array}{rccl}
R_0\colon    & (x,y,z)  &\mapsto & (-x,y,-z) + (a,0,a),\\
R_{1}\colon & (x,y,z)  &\mapsto  & (-x,z,y),\\
R_2\colon  & (x,y,z) &\mapsto & (z,y,x),\\
\hat{R}_2\colon  & (x,y,z) &\mapsto & (x,-y,z),
\end{array}
\end{equation}
with $a \ne 0$ (see Figure~\ref{k511}). This determines a new regular complex, denoted $\K_5(1, 1)$, with faces given by helices over squares and with four faces around each edge (that is, $r=4$ and $G_2=D_2$).% Case 10 of my notes

The vertex-set of $\K_5(1, 1)$ is $\Lambda_{(a, a, 0)}$. The helical faces have their axes parallel to a coordinate axis, and each coordinate axis occurs. The set of faces of $\K_5(1, 1)$ splits into three classes each determined by the  coordinate axis that specifies the direction for the axes of its members. The faces in each class constitute four copies of the (blended) apeirohedron $\{4,4\} \#\{\infty\}$ (see \cite[p. 222]{arp}). From any copy in the class determined by the $x$-direction we can obtain another copy through translation by $(2a,0,0)$; the remaining two copies then are  obtained by rotating the first two copies by $\pi/2$ about the axis of a helical face. The situation is similar for the other two classes. Thus $\K_5(1, 1)$ is a regular complex that can be viewed as a compound of twelve such apeirohedra, four for each coordinate direction. The vertex-figure group of $\K_5(1,1)$ is the full octahedral group (see Lemma~\ref{cubelemma}), and the vertex-figures are isomorphic to the edge graph of the cuboctahedron.

Observe here that the vertex-figure of $\K_5(1,1)$ at $o$ induces a non-standard realization of the cuboctahedron with equilateral triangular and skew square faces, and with vertices $(\pm 1, \pm 1, 0)$, $(\pm 1, 0, \pm 1)$ and $(0, \pm 1, \pm 1)$ (say). The four vertices adjacent to $(1, 1, 0)$ are $(-1, 0, 1)$, $(0, -1, 1)$, $(-1, 0, -1)$ and $(0, -1, -1)$; these correspond to the midpoints of the four edges of $C$ sharing a vertex with the edge of $C$ opposite the edge with midpoint $(1, 1, 0)$. A typical triangle has vertices $(1, 1, 0)$, $(-1, 0, 1)$ and $(0, -1, -1)$, while a typical square is given by the vertices $(1, 1, 0)$, $(-1, 0, 1)$, $(1, -1, 0)$ and $(-1, 0, -1)$, in that order.

These observations also shed some light on why four copies of $\{4,4\} \#\{\infty\}$ per coordinate direction are needed to cover all helical faces of $\K_5(1,1)$ with this direction. In fact, a single copy of this apeirohedron accounts for just one square of the cuboctahedral vertex-figure, so a pair of opposite squares requires two such copies; on the other hand, the base vertex $o$ lies in just one half of the helical faces of $\K_5(1,1)$ with a given direction, with the other half accounting for the two additional copies of the apeirohedron.

\begin{figure}
\begin{center}
\includegraphics[width=8cm, height=7cm]{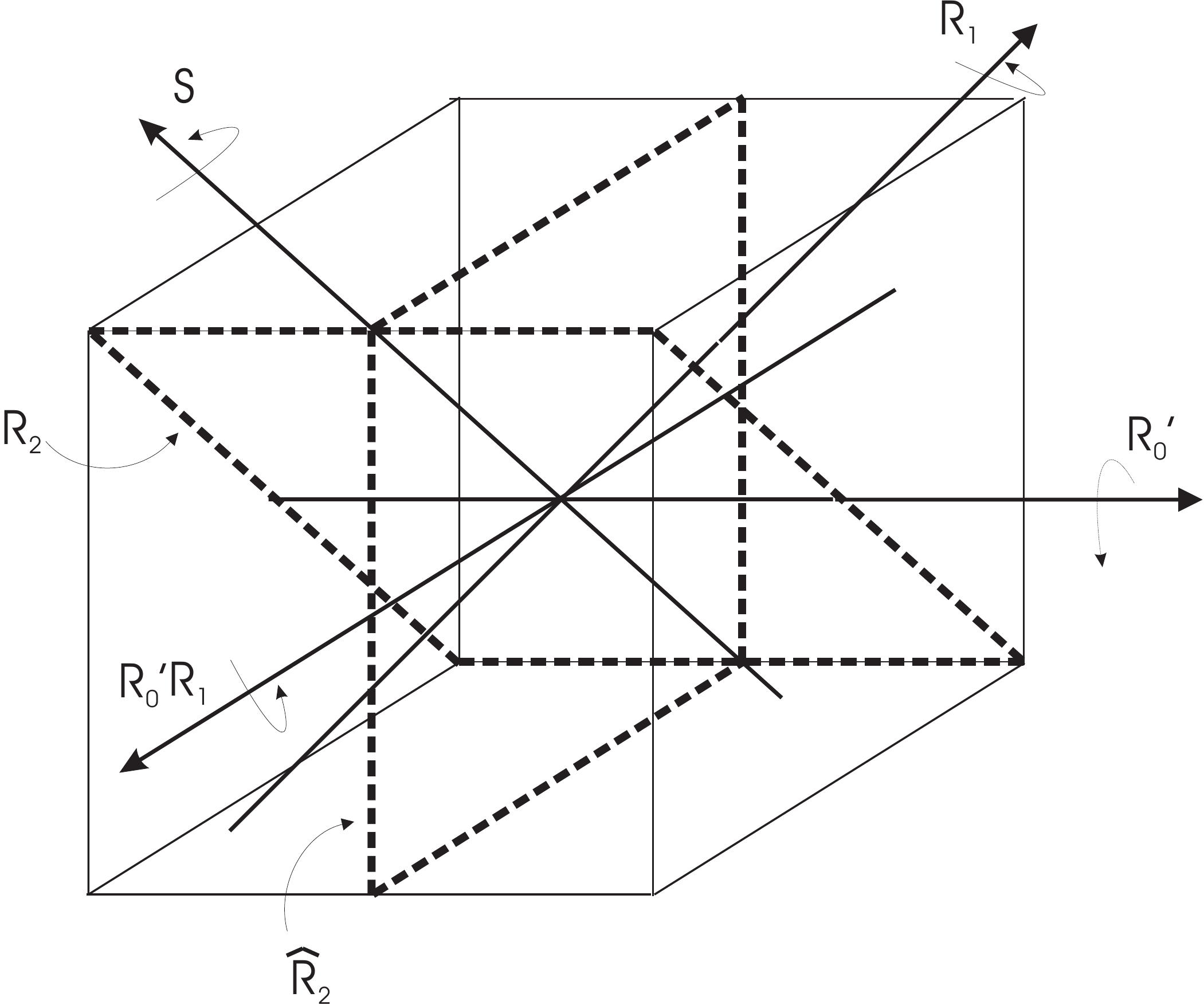}
\caption{The special group of the complex $\K_{5}(1, 1)$}\label{k511}
\end{center}
\end{figure}

\medskip
\noindent{\em Case Ic: $L_0'$ is parallel to a face diagonal of $C$ and $L_1$ is a coordinate axis}
\medskip

Suppose $L_0'$ is the line through $o$ and $(0, 1, 1)$, and $L_1$ is the $y$-axis. Then there are two possible choices for $L_2$, namely the line through $o$ and $(0, 1, -1)$, or the line through $o$ and $(1, 1, -1)$. However, if $L_2$ is the line through $o$ and $(0, 1, -1)$, then $S$ is a half-turn and $G_2$ is the dihedral group generated by the reflections in the $yz$-plane and the plane $y=-z$, which contradicts our previous hypothesis that the axis of $R_1$ not be contained in a mirror of a plane reflection in $G_2$. Therefore we may assume that $L_2$ is the line through $o$ and $(1, 1, -1)$ and hence that the twin vertex has the form $(a, a, -a)$ with $a \ne 0$.

Now $S$ is a $3$-fold rotation and the subgroup $G_2$ must be cyclic of order $3$. In fact, the axis $L_1$ is contained in the plane $x=-z$, which would become the mirror of a plane reflection if $G_2$ was dihedral of order $6$. It follows that $G$ has generators $R_0$, $R_1$ and $S$ given by
\begin{equation}
\begin{array}{rccl}
R_0\colon    & (x,y,z)  &\mapsto & (-x,z,y) + (a,a,-a),\\
R_{1}\colon & (x,y,z)  &\mapsto  & (-x,y,-z),\\
S\colon  & (x,y,z) &\mapsto & (y,-z,-x),
\end{array}
\end{equation}
with $a \ne 0$ (see Figure~\ref{k611}). These generators yield a regular complex, denoted $\K_6(1, 1)$, which again has faces given by helices over squares but now with three faces surrounding each edge (that is, $r=3$ and $G_2=C_3$).

\begin{figure}
\begin{center}
\includegraphics[width=8cm, height=7cm]{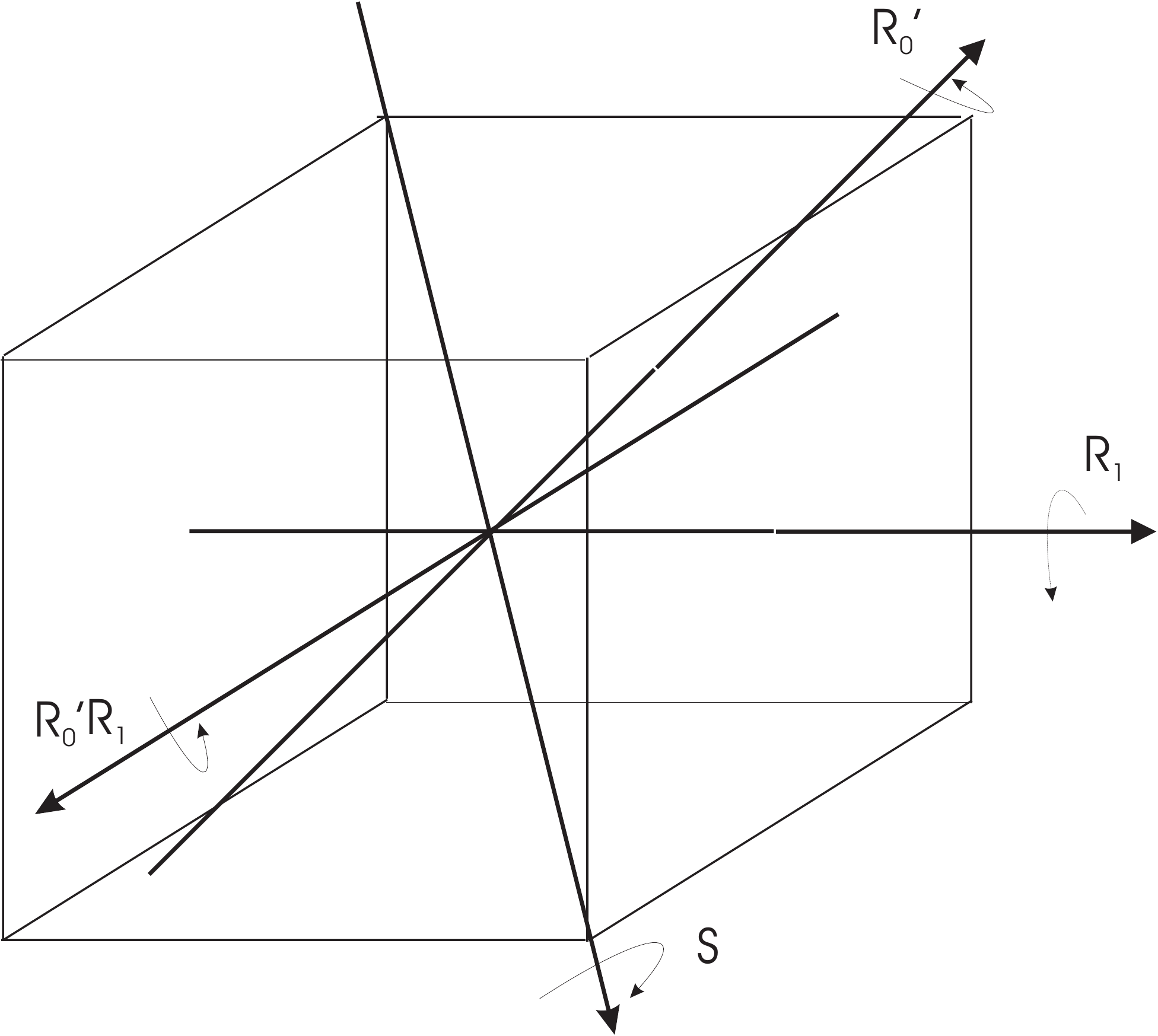}
\caption{The special group of the complex $\K_{6}(1, 1)$}\label{k611}
\end{center}
\end{figure}

The vertex- and edge-sets of $\K_6(1, 1)$ coincide with those of $\K_4(1, 1)$, respectively; in particular, the edge graphs of $\K_6(1, 1)$ and $\K_4(1, 1)$ are the same and form a diamond net. In fact, $\K_6(1, 1)$ is a subcomplex of $\K_4(1, 1)$ made up of only half the faces of the latter. The group $G$ consists of all proper isometries in the group of $\K_4(1, 1)$, and $G_2$ is the cyclic subgroup of the corresponding (dihedral) group of $\K_4(1,1)$. Thus, in a sense, the faces of $\K_6(1, 1)$ are exactly the right-handed (say) helices of $\K_4(1, 1)$. Now each coordinate axis is parallel to the axis of just one helix containing a given edge of $\K_6(1, 1)$. The vertex-figure of $\K_6(1, 1)$ at $o$ is the (simple) edge-graph of the tetrahedron with vertices $(a, -a, a)$, $(-a, a, a)$, $(a, a, -a)$, $(-a, -a, -a)$. The vertex-figure group is $[3, 3]^+$.
%(Case 3 of my notes)

\medskip
\noindent{\em Case Id: $L_0'$ and $L_1$ both are parallel to face diagonals of $C$}
\medskip

We show that this case cannot contribute a regular complex. Suppose $L_0'$ is the line through $o$ and $(0, 1, 1)$, and $L_1$ is the line through $o$ and $(0, 1, -1)$. Now there is just one choice for $L_2$, namely the line through $o$ and $(-1, 1, -1)$, and then $S$ is a $3$-fold rotation. Moreover,  $G_2$ must be cyclic of order $3$. In fact, if $G_2$ was dihedral, the plane $y=-z$ would become the mirror of a reflection in $G_2$ and contain the axis $L_1$ of $R_1$, contrary to our previous hypothesis on $G_2$ and $L_1$.

Since the lines $L_0'$ and $L_1$ are perpendicular, the faces are planar zigzags. All three generators $R_0$, $R_1$ and $S$ of $G$ are again proper isometries. We claim that now $\K$ must be the $2$-skeleton of the regular $4$-apeirotope
\[\mathcal{P} := \{\{\infty, 4\}_4 \# \{\}, \{4, 3\}\}\]
in $\E$; however, this is impossible as $\K$ is simply flag-transitive.  In fact, let $T_0, T_1, T_2, T_3$ denote the distinguished generators of the symmetry group $G(\mathcal{P})$ of $\mathcal{P}$, where $T_0$ is the point reflection in $\frac{1}{2}v$, with $v = (-a, a, -a)$, and the distinguished generators $T_1, T_2, T_3$ for the cube $\{4,3\}$ (the vertex-figure of $\mathcal{P}$) are chosen in such a way that $T_0 T_3 = R_0$, $T_1 T_3 = R_1$ and $T_2 T_3 = S$. Since these three rotations generate the even subgroup of $G(\mathcal{P})$, it follows that $\K$ must necessarily be the $2$-skeleton of $\mathcal{P}$. Thus Case Id does not yield a (simply flag-transitive) regular complex.
%(Case 6 in my notes)

\medskip
\noindent{\bf Case II: $L_3'$ is parallel to a face diagonal of $C$}
\medskip

We shall see that Case II does not contribute a regular complex (with a simply flag-transitive group). Suppose $R_0' R_1$ is a rotation whose axis $L_3'$ is the line passing through the midpoints of a pair of antipodal edges of $C$, the line through $o$ and $(0, 1, 1)$ (say). Then $R_0' R_1$ must be a half-turn and the faces of $\K$ must be zigzags.
There there are two possible choices for the axis $L_0'$ of the half-turn $R_0'$, namely the $x$-axis or the line through $o$ and $(0,1,-1)$. In each case $L_1$ must necessarily be perpendicular to $L_0$.

\medskip
\noindent{\em Case IIa: $L_0'$ is the $x$-axis}
\medskip

If $L_0'$ is the $x$-axis, then $L_1$ must necessarily be the line through $o$ and $(0, 1, -1)$. In this situation, the rotation axis $L_2$ of $S$ must be a coordinate axis, the $y$-axis (say), and the subgroup $G_2$ must be cyclic of order $4$ or dihedral of order $4$ or $8$. We can rule out the possibility that $G_2$ is dihederal. In fact, if $G_2$ was dihedral, then since the $yz$-plane contains $L_1$, the group $G_2$ would necessarily have order $4$ and be generated by the reflections in the planes $x = \pm z$; however, this would immediately force the vertex-figure group to be the full special group $[3, 4]$ and then the subgroup $G_2$ to have order $8$ (see Lemma~\ref{cubelemma}).

Therefore we may assume that $G_2$ is cyclic of order $4$. Since then $G$ is generated by rotations and $\K$ has planar faces, we can proceed as in Case Id and establish that $\K$ must be the $2$-skeleton of the regular $4$-apeirotope
\[\mathcal{P} := \{\{\infty, 3\}_6 \# \{\}, \{3, 4\}\}.\]
In fact, the distinguished generators $T_0, T_1, T_2, T_3$ of $G(\mathcal{P})$ can once again be chosen in such a way that $T_0 T_3 = R_0$, $T_1 T_3 = R_1$ and $T_2 T_3 = S$. Hence $\K$ must be the $2$-skeleton of $\mathcal P$, which we know to be impossible.
%(cases 2 and 9 of my notes)

\medskip
\noindent{\em Case IIb: $L_0'$ is the line through $o$ and $(0, 1, -1)$}
\medskip

If $L_0'$ is the line through $o$ and $(0, 1, -1)$, then $L_1$ and $L_2$ must necessarily be the $x$-axis and the line through $o$ and $(1, 1, +1)$ respectively. If $G_2$ was dihedral, then, contrary to our earlier hypothesis, the plane $y = z$ would become the mirror of a reflection in $G_2$ containing $L_1$. Hence $G_2$ must be cyclic of order $3$, so again $G$ is generated by rotations. Now $\K$ must be the $2$-skeleton of the regular $4$-apeirotope
\[\mathcal{P} := \{\{\infty, 3\}_6 \# \{\}, \{3, 3\}\},\]
once again by the same arguments involving a choice of generators of $G(\mathcal{P})$.
%(Case 5 of my notes)

\medskip
\noindent{\bf Case III: $L_3'$ is the line through a main diagonal of $C$}
\medskip

Suppose $R_0' R_1$ is a rotation whose axis $L_3'$ is the line through a main diagonal of $C$, the line through the vertices $\pm (1, 1, 1)$ (say). Then we may assume that $R_0'$ is the half-turn about the line $L_{0}'$ through $o$ and $(1, -1, 0)$, and that $R_1$ is the half-turn about the line $L_1$ through $o$ and $(1, 0, -1)$. It follows that $R_0' R_1$ is a $3$-fold rotation and that the faces of $\K$ are helices over triangles. We now have three choices for the axis $L_2$ of $S$, namely the $z$-axis, the line through $o$ and $(1, 1, 0)$, or the line through $o$ and $(1, 1, -1)$.

\medskip
\noindent{\em Case IIIa: $L_2$ is the $z$-axis}
\medskip

If $L_2$ is the $z$-axis, then the twin vertex has the form $(0, 0, a)$ for some $a \ne 0$. We already discussed the case, ruled out here by our previous assumptions, that $G_2$ is a dihedral group with the $xz$-plane as a reflection mirror that contains $L_1$; this gave us the complexes $\K_2(1, 1)$ %(case 12 of my notes)
when $G_2$ was dihedral of order $4$, and $\K_3(1, 1)$ when $G_2$ was dihedral of order $8$. There is just one other way for $G_2$ to be dihedral, and this can be eliminated as follows. It occurs when $G_2$ is generated by the reflections in the planes $x = \pm y$ and hence is of order $4$. However, then the vertex-figure group is the full special group $[3, 4]$,  forcing $G_2$ to have order $8$ rather than $4$ (see again Lemma~\ref{cubelemma}).

This only leaves the possibility that $G_2$ is cyclic of order $4$. Then the generators $R_0$, $R_1$ and $S$ of $G$ are given by
\begin{equation}
\begin{array}{rccl}
R_0\colon    & (x,y,z)  &\mapsto & (-y, -x, -z) + (0,0,a),\\
R_{1}\colon & (x,y,z)  &\mapsto  & (-z, -y, -x),\\
S\colon  & (x,y,z) &\mapsto & (-y, x, z),
\end{array}
\end{equation}
for some $a \ne 0$ (see Figure~\ref{k711}). Now we obtain a regular complex, denoted $\K_7(1, 1)$, with helices over triangles as faces, four surrounding each edge (that is, $r=4$ and $G_{2}=C_4$). % (Case 7 of my notes)

\begin{figure}
\begin{center}
\includegraphics[width=8cm, height=7cm]{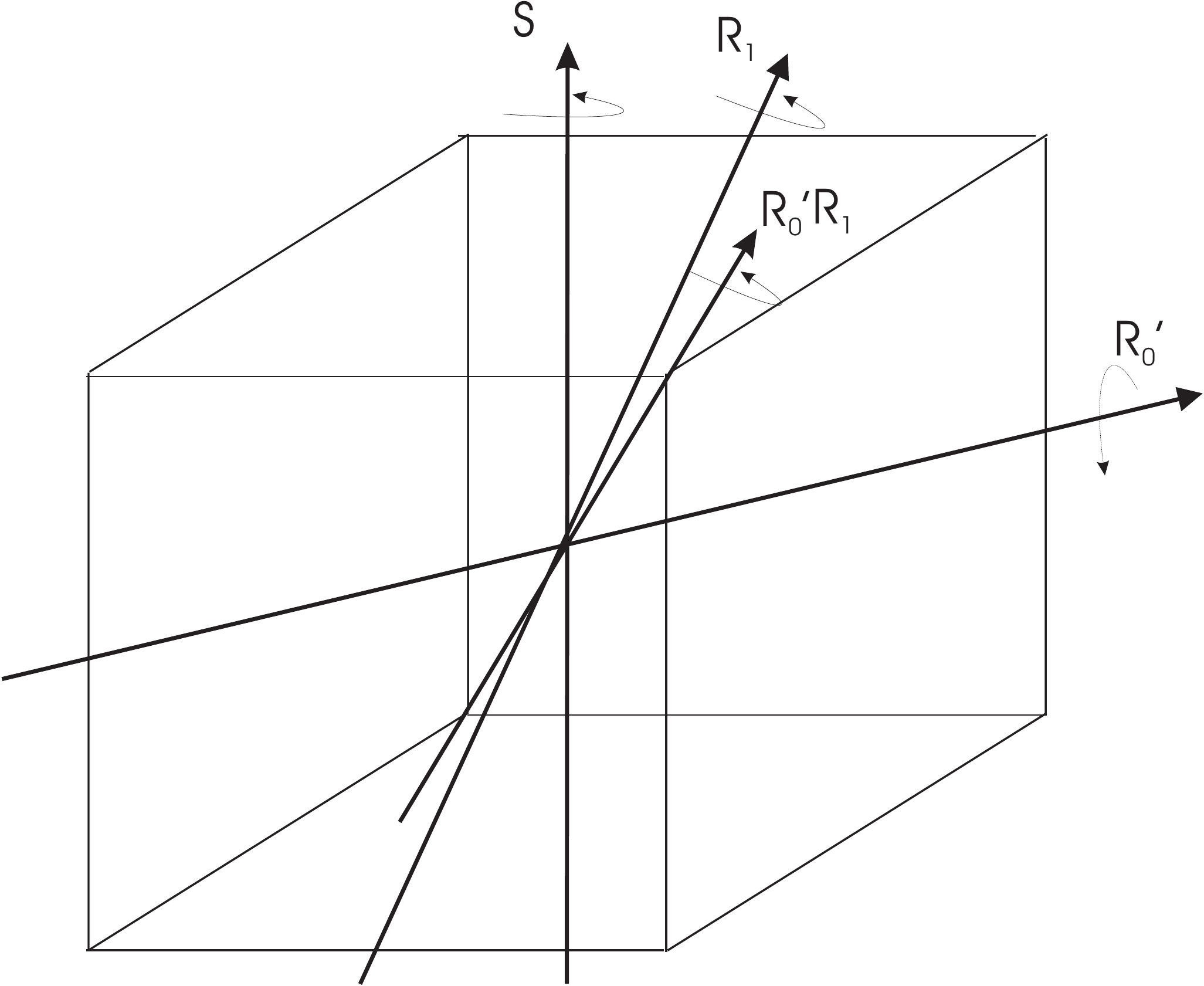}
\caption{The special group of the complex $\K_{7}(1, 1)$}\label{k711}
\end{center}
\end{figure}

The edge graph of $\K_7(1, 1)$ coincides with the edge graph of $\K_3(1, 1)$. The group of $\K_7(1,1)$ consists of all proper isometries in the group of $\K_3(1, 1)$, and its subgroup $G_2$ is just the cyclic subgroup of the corresponding group for $\K_3(1,1)$. Hence the faces of $\K_7(1, 1)$ are just the right-handed (say) Petrie polygons of the cubical tessellation of $\E$ with vertex-set $a\mathbb{Z}^3$. The vertex-figure of $\K_7(1, 1)$ at $o$ is the (simple) edge graph of the octahedron with vertices $(\pm a, 0, 0), (0, \pm a, 0), (0, 0, \pm a)$. The vertex-figure group is $[3, 4]^+$.

\medskip
\noindent{\em Case IIIb: $L_2$ is the line through $o$ and $(1, 1, 0)$}
\medskip

If $L_2$ is the line through $o$ and $(1, 1, 0)$, the twin vertex has the form $(a, a, 0)$ for some $a \ne 0$. The group  $G_2$ must be dihedral of order $4$ (recall that $r\geq 3$), generated by the reflections $R_2$ and $\hat{R}_2$ in the $xy$-plane and the plane $x=y$, respectively. Then $G$ is generated by $R_0$, $R_1$, $R_2$ and $\hat{R}_2$ given by
\begin{equation}
\begin{array}{rccl}
R_0\colon    & (x,y,z)  &\mapsto & (-y, -x, -z) + (a, a, 0),\\
R_{1}\colon & (x,y,z)  &\mapsto  & (-z, -y, -x),\\
R_2\colon  & (x,y,z) &\mapsto & (x, y, -z),\\
\hat{R}_2\colon  & (x,y,z) &\mapsto & (y, x, z),
\end{array}
\end{equation}
for some $a \ne 0$ (see Figure~\ref{k811}). This leads to a new regular complex, denoted $\K_8(1, 1)$, which has
helices over triangles as faces such that four surround each edge (that is, $r=4$ and $G_2=D_2$). %(case 11 of my notes)

\begin{figure}
\begin{center}
\includegraphics[width=8cm, height=7cm]{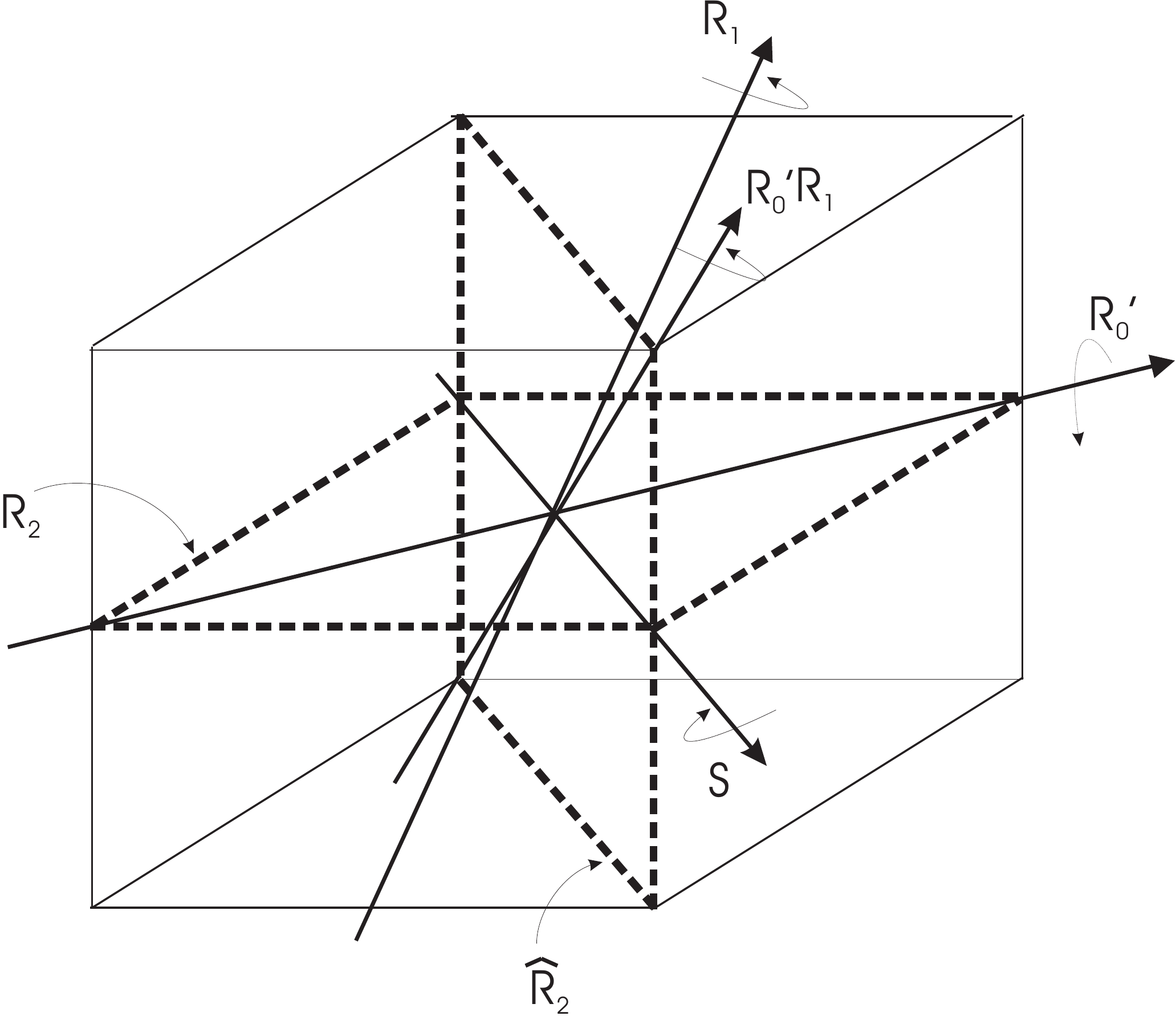}
\caption{The special group of the complex $\K_{8}(1, 1)$}\label{k811}
\end{center}
\end{figure}

The vertex-set of $\K_8(1, 1)$ is $\Lambda_{(a, a, 0)}$. Now the edges are face diagonals of square faces of the cubical tessellation of $\E$ with vertex-set $a \mathbb{Z}^3$.  Any three consecutive edges $e_1$, $e_2$ and $e_3$ of a face of $\K_8(1, 1)$ can be seen to lie in a $2a\!\times\! 2a\!\times\! 2a$ cube $Q$ formed from eight cubes of this tessellation. The middle edge $e_2$ joins the midpoints of two adjacent faces $f_1$ and $f_2$ of $Q$, while the first edge $e_1$ and the last edge $e_3$, respectively, lie in $f_1$ and $f_2$ and join the midpoints of $f_1$ and $f_2$ to opposite vertices $u$ and $w$ of $Q$. To construct the entire
helical face we only need to translate $e_1, e_2, e_3$ by integral multiples of the vector $u-w$. The vertex-figure of
$\K_8(1, 1)$ is isomorphic to the (simple) edge graph of the cuboctahedron (inducing the same non-standard realization of the cuboctahedron as for the vertex-figure of $\K_5(1, 1)$). The vertex-figure group is $[3, 4]$.

\medskip
\noindent{\em Case IIIc: $L_2$ is the line through $o$ and $(1, 1, -1)$}
\medskip

Finally, if $L_2$ is the line through $o$ and $(1, 1, -1)$, then the twin vertex has the form $(a, a, -a)$ for some $a \ne 0$. Now observe that the plane $x=-z$ contains both $L_1$ and $L_2$. It follows that, if $G_2$ was dihedral of order $6$, then $L_1$ would necessarily lie in the mirror of a plane reflection of $G_2$; in fact, this possibility just yielded the complex $\K_1(1, 1)$ that we described earlier. On the other hand, if $G_2$ is cyclic of order $3$, then $G$ is generated by the proper isometries
\begin{equation}
\begin{array}{rccl}
R_0\colon    & (x,y,z)  &\mapsto & (-y, -x, -z) + (a, a, 0),\\
R_{1}\colon & (x,y,z)  &\mapsto  & (-z, -y, -x),\\
S\colon  & (x,y,z) &\mapsto & (-z, x, -y),
\end{array}
\end{equation}
for some $a \ne 0$ (see~Figure \ref{k911}). The resulting regular complex, denoted $\K_9(1, 1)$, has helices over triangles as faces such that three surround each edge (that is, $r=3$ and $G_2=C_3$). %(case 4 of my notes)

\begin{figure}
\begin{center}
\includegraphics[width=8cm, height=7cm]{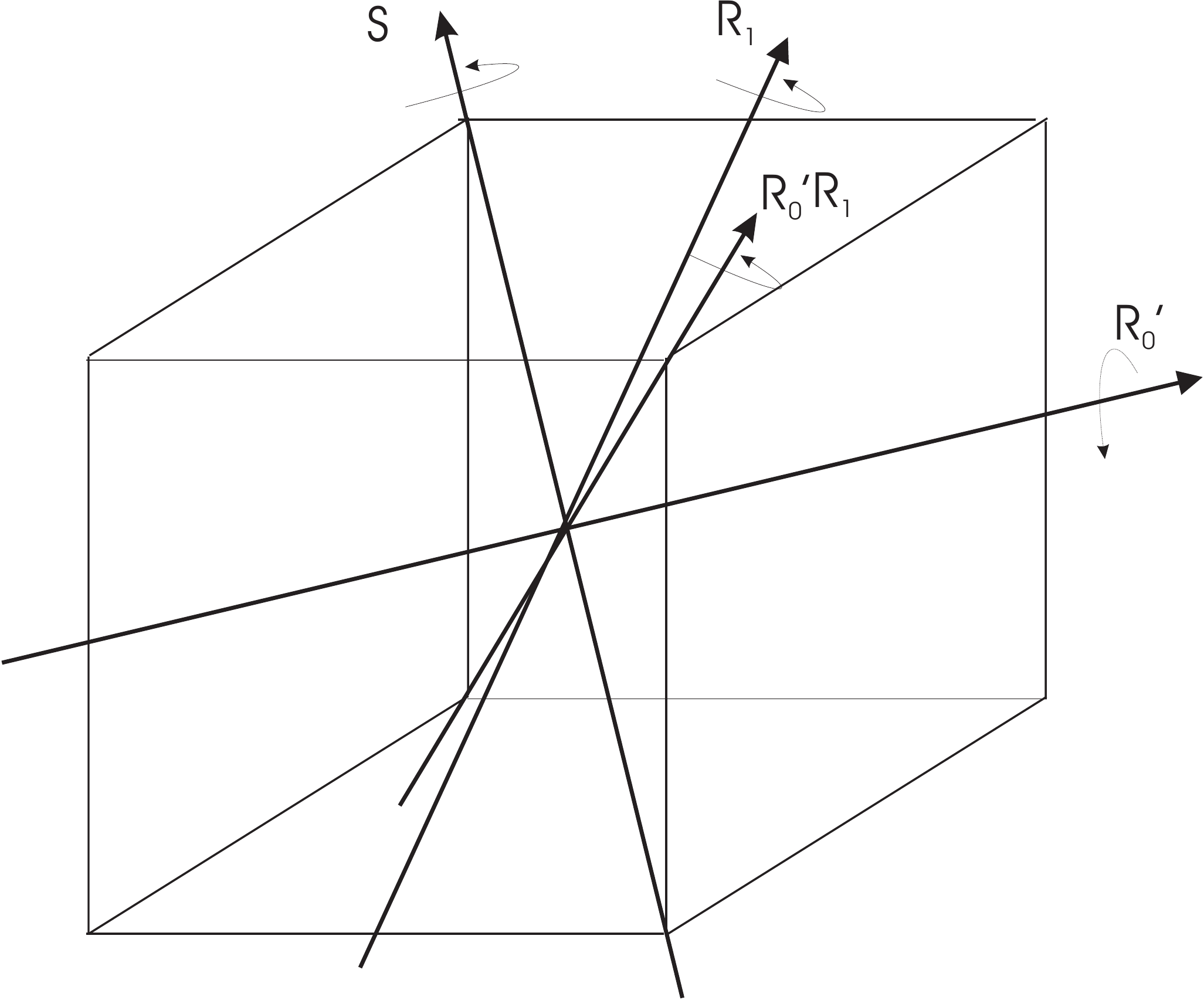}
\caption{The special group of the complex $\K_{9}(1, 1)$}\label{k911}
\end{center}
\end{figure}

Just like $\K_6(1, 1)$ and $\K_7(1, 1)$, this new regular complex $\K_9(1, 1)$ has only right-handed (say) helices as faces. Now the faces are given by all the right-handed helical faces of $\K_1(1, 1)$. In particular, the edge graphs of $\K_9(1, 1)$ and $\K_1(1, 1)$ are the same. The group of $\K_9(1, 1)$ consists of the proper isometries in the group of $\K_1(1, 1)$, and its subgroup $G_2$ is just the cyclic subgroup of the corresponding group for $K_1(1,1)$. The vertex-figure is the (simple) edge graph of the cube with vertices $(\pm a, \pm a, \pm a)$ and the vertex-figure group is $[3, 4]^+$.
\bigskip

In conclusion, our discussion in Sections~\ref{lam} and \ref{notlam} establishes the following theorem.

\begin{theorem}
Apart from polyhedra, the complexes $\K_1(1, 1), \dots, \K_9(1, 1)$ described in this section are the only simply flag-transitive regular polygonal complexes with mirror vector $(1, 1)$.
\end{theorem}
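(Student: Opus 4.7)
The plan is to exploit the rigidity of the special group $G_*$ together with the operation $\lambda_1$ to reduce the enumeration to a short geometric case analysis, mirroring the organization of Sections~\ref{lam} and \ref{notlam}.

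First I would pin down the special group. Since $R_0$ and $R_1$ are half-turns (both entries of the mirror vector being $1$) and $G_2$ has order $r\geq 3$ with rotation subgroup generated by a non-involutory rotation $S$, the three rotations $R_0'$, $R_1$, $S$ lie in $G_*$ and their axes $L_0', L_1, L_2$ are constrained: $L_2\perp L_0'$ but $L_2\not\perp L_1$, $L_2\neq L_1$, and (because the faces are not linear apeirogons) $L_0'\not\parallel L_1$. A quick check of the irreducible finite crystallographic subgroups of $\mathcal{O}(3)$ rules out the three tetrahedral groups $[3,3]^{+}$, $[3,3]^{*}$, $[3,3]$, because in each case every non-involutory rotation has an axis that meets every involutory axis non-orthogonally. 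This leaves only $G_*=[3,4]^+$ (when $G_2$ is cyclic) or $G_*=[3,4]$ (when $G_2$ is dihedral).

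Second, I would split the enumeration by a structural dichotomy: either (A) $L_1$ is contained in a mirror plane of a plane reflection in $G_2$ (which forces $G_2$ dihedral), or (B) it is not. In case (A), Lemma~\ref{appthree} tells me that applying $\lambda_1$ produces a simply flag-transitive regular complex with mirror vector $(1,2)$, and since $\lambda_1$ is involutory at the group level, the inverse operation recovers the complex from the $(1,2)$-list. Walking through the complete enumeration of $(1,2)$-complexes from \cite{pelsch} and keeping only those whose $R_1$-mirror is perpendicular to a mirror of a plane reflection in $G_2$, I obtain exactly the four complexes $\K_1(1,1),\ldots,\K_4(1,1)$ displayed in \eqref{thekones}; the inherited vertex-figure groups and subgroups $G_2$ then yield the explicit descriptions.

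Third, for case (B) neither $\lambda_0$ nor $\lambda_1$ is available, so I would attack the geometry directly using the position of $L_3'$, the axis of $R_0'R_1$ in the special group. Up to conjugacy in $[3,4]$ there are three possibilities: $L_3'$ is a coordinate axis, is parallel to a face diagonal of the reference cube $C$, or is parallel to a main diagonal. In each case I would enumerate the admissible configurations of $L_0'$, $L_1$, $L_2$ subject to the orthogonality constraints above, and then for each configuration decide whether $G_2$ is cyclic or dihedral. Several potential configurations are eliminated by three recurring obstructions: (i) $L_0'$, $L_1$, $L_2$ coplanar forcing an invariant plane and contradicting irreducibility; (ii) $G_2$ dihedral forcing (via Lemma~\ref{cubelemma}) the vertex-figure group to be all of $[3,4]$, hence $|G_2|$ to be larger than assumed; (iii) $G$ generated by rotations with planar faces, in which case $G$ is seen to be the even subgroup of the symmetry group of a known regular $4$-apeirotope, so $\K$ would be the $2$-skeleton of that apeirotope, contradicting simple flag-transitivity. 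The surviving configurations are exactly those that produce $\K_5(1,1),\ldots,\K_9(1,1)$, and in each I would write down explicit generators and read off vertex-sets, face type, $r$, and vertex-figure.

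The main obstacle I expect is the book-keeping in case (B): making sure that every choice of $(L_0',L_1,L_2)$ compatible with the special-group constraints is considered, and that for each choice both the cyclic and dihedral options for $G_2$ are ruled in or out by the correct one of obstructions (i)--(iii). Obstruction (iii) is particularly delicate because it requires recognizing $G$ as the rotation subgroup of a specific apeirotope $\apeir\mathcal{Q}$; matching the distinguished generators $T_iT_3$ of $G(\mathcal{P})$ with $R_0,R_1,S$ must be done carefully so that the identification is forced and not merely possible.
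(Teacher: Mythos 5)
Your proposal is correct and follows essentially the same route as the paper: the same special-group reduction to $[3,4]^+$ or $[3,4]$, the same dichotomy on whether the axis of $R_1$ lies in a mirror of a plane reflection in $G_2$ (handled via $\lambda_1$ and the $(1,2)$-enumeration in the first case), and the same three-way case analysis on the position of $L_3'$ with the same three elimination mechanisms in the second. The only caveat is that your obstruction (iii) must also cover the finite-planar-face subcase (the identification with the $2$-skeleton of $\{4,3,4\}$ in Case Ib), not just zigzag faces, but this is a matter of wording rather than substance.
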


\section{Complexes with mirror vector~$(0,k)$ and dihedral $G_2$}
\label{mir0k}

We begin with the following lemma about regular complexes with face mirrors, which is also of independent interest.

\begin{lemma}
\label{halfturn}
Let $\mathcal{L}$ be a regular complex with face mirrors. Then the subgroups $G_0(\mathcal{L})$ and $G_{1}(\mathcal{L})$ of $G(\mathcal{L})$ each contain exactly one half-turn (with its axis contained in, or perpendicular to, the plane of the base face). Moreover, each either contains one pair of commuting plane reflections (with their mirrors given by the plane through the base face, and the perpendicular plane meeting the first in the axis of the half-turn), or one plane reflection (with its mirror given by the plane through the base face) and one point reflection (with center contained in the base face).
\end{lemma}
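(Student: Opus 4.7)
The plan is to invoke the characterization recalled in Section~\ref{terba}, by which every regular polygonal complex with non-trivial flag stabilizers is the $2$-skeleton of a regular $4$-apeirotope $\mathcal{P}$ in $\E$. I would write $G(\mathcal{L})=G(\mathcal{P})=\langle T_0, T_1, T_2, T_3\rangle$ with the distinguished involutory generators, numbered so that $T_i$ fixes every element of the base flag $\{F_0,F_1,F_2,F_3\}$ of $\mathcal{P}$ except $F_i$. Since the base flag of $\mathcal{L}$ is $\{F_0,F_1,F_2\}$, the stabilisers read off from the Coxeter diagram of $\mathcal{P}$ are
\[
G_0(\mathcal{L})=\langle T_0, T_3\rangle,\qquad G_1(\mathcal{L})=\langle T_1, T_3\rangle,
\]
and in each case the two generators commute (being non-adjacent in the diagram), so both subgroups are Klein four-groups.

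Next I would identify $T_3$ with the face-mirror reflection. Indeed, $T_3$ fixes $F_0$, $F_1$ and $F_2$ and therefore fixes the affine hull $\Pi$ of the base face $F_2$ pointwise, so it must be the plane reflection in $\Pi$ witnessing the face-mirror hypothesis. The lemma then reduces to the classification of $T_0$ (and, by an entirely symmetric argument, $T_1$) as an involution of $\E$, together with the computation of $T_0T_3$. Because $T_0$ swaps the two vertices of $F_1$ while fixing $F_2$ setwise, it fixes the midpoint $m$ of $F_1$ and preserves $\Pi$ setwise, with its restriction to $\Pi$ a non-trivial involution at $m$. Enumerating the involutions of $\E$ with these properties gives exactly four possibilities: (a) a plane reflection whose mirror passes through $m$ and is perpendicular to $\Pi$, (b) a half-turn about a line in $\Pi$ through $m$, (c) a half-turn about the line through $m$ normal to $\Pi$, and (d) the point reflection at $m$.

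Finally I would compose each of these with $T_3$ to recover the remaining element of the Klein four-group. In cases (a) and (b) the three non-identity elements turn out to be two commuting plane reflections (one in $\Pi$, the other perpendicular to $\Pi$) and a single half-turn whose axis is the intersection of the two mirrors and therefore lies in $\Pi$; this matches the first alternative of the statement. In cases (c) and (d) they are $T_3$, the point reflection at $m$, and the half-turn about the axis through $m$ normal to $\Pi$; this matches the second alternative, with the centre of the point reflection sitting at $m\in F_2$ and the half-turn axis perpendicular to $\Pi$. In either family there is exactly one half-turn, as claimed. The main obstacle I foresee is the bookkeeping in the enumeration step: one must verify that (a)--(d) really exhaust the involutions fixing $m$ and stabilising $\Pi$ setwise without fixing it pointwise, and that the products with $T_3$ come out as claimed. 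This is a short but somewhat delicate computation, best carried out in coordinates with $m$ at the origin, $F_1$ along a coordinate axis, and $\Pi$ a coordinate plane.
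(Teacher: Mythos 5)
Your argument is correct, but it takes a genuinely different route from the paper's. You pass through the characterization of complexes with face mirrors as $2$-skeletons of regular $4$-apeirotopes, read off $G_0(\mathcal{L})=\langle T_0,T_3\rangle$ and $G_1(\mathcal{L})=\langle T_1,T_3\rangle$ from the string diagram of $G(\mathcal{P})$, identify $T_3$ with the face-mirror reflection, and then enumerate by hand the involutions of $\E$ that can play the role of $T_0$ (resp.\ $T_1$), composing each with $T_3$. The paper's proof avoids both the apeirotope machinery and the case analysis: it uses only the facts (recalled from Part I) that $G_0(\mathcal{L})\cong C_2\times C_2\cong G_1(\mathcal{L})$ and that the flag stabilizer has order $2$, generated by the reflection $R$ in the plane of the base face, which is an \emph{improper} isometry lying in both subgroups; a parity count on proper versus improper elements of a Klein four-group containing an improper involution shows each subgroup has exactly one non-trivial proper element, necessarily a half-turn, and the remaining two involutions are commuting improper involutions (hence plane or point reflections) whose product is that half-turn --- which forces exactly the two stated configurations. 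Your approach is longer but yields explicit generators and mirror positions, which is essentially the ``alternative proof via \cite[Theorem 4.1]{pelsch}'' that the paper mentions parenthetically. Two small points to tighten: the identification $G(\mathcal{L})=G(\mathcal{P})$ is itself part of what Part I establishes (the $2$-skeleton is shared by a Petrie-dual pair, so this is not automatic), and the inference that $T_3$ fixes the affine hull $\Pi$ of $F_2$ pointwise merely because it fixes $F_0$, $F_1$ and $F_2$ needs a word of justification (an isometry can stabilize a planar polygon without fixing its plane pointwise); the clean way out is to note that $T_3$ stabilizes the base flag and the flag stabilizer is generated by the reflection in $\Pi$, so $T_3$ is that reflection.
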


\begin{proof}
Recall from \cite[\S 3]{pelsch} that $\mathcal{L}$ has planar faces and that $G(\mathcal{L})$ has flag-stabilizers of order $2$. In particular, the stabilizer of the base flag is generated by the reflection $R$ in the plane through the base face. Now,  since $G_0(\mathcal{L})\cong C_{2}\times C_{2} \cong G_1(\mathcal{L})$ and $R$ is an improper isometry contained in $G_0(\mathcal{L})$ and $G_1(\mathcal{L})$, there is just one non-trivial proper isometry in each of $G_0(\mathcal{L})$ and $G_1(\mathcal{L})$. Thus each of these subgroups contains a unique half-turn.  The other two involutions in each subgroup commute and their product is this half-turn; then this only leaves the two possibilities described.  (An alternative proof of the lemma could be obtained from \cite[Theorem 4.1]{pelsch} and would provide more detailed information about $G_{0}(\mathcal{L})$ or $G_{1}(\mathcal{L})$.)
\end{proof}

Now according to Lemma~\ref{apptwo}, each simply flag-transitive complex $\K$ with mirror vector $(0,k)$ and a dihedral group $G_2$ can be obtained from a regular complex $\mathcal{L}$ which either has face mirrors or is simply flag-transitive with mirror vector $(1,k)$, by applying to $\mathcal{L}$ the operation $\lambda_0$ determined by a plane reflection from $G_2(\mathcal{L})$ with mirror perpendicular to the axis of the (unique) half-turn in $G_0(\mathcal{L})$; here necessarily $\mathcal{L}=\K^{\lambda_0}$.

We can first rule out the possibility that the complex $\mathcal{L}$ has face-mirrors. This follows from our next lemma applied with $\mathcal{L}=\K^{\lambda_0}$, noting that then $\mathcal{L}'=\K$ would also have face mirrors, which contradicts our assumptions.

\begin{lemma}\label{facemirlambda01}
Let $\mathcal{L}$ be a regular complex with face mirrors, and let $R_0$ be the half-turn in $G_0(\mathcal{L})$. Suppose $G_2(\mathcal{L})$ contains a plane reflection $R_2$ whose mirror is perpendicular to the axis of $R_0$. Let $\mathcal{L}'$ denote the regular complex obtained from $\mathcal{L}$ by the operation $\lambda_0(R_2)$ associated with $R_2$. Then $\mathcal{L}'$ also has face mirrors.
\end{lemma}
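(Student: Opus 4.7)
\medskip\noindent\emph{Proof plan.} The strategy is to exhibit a plane reflection in $G(\mathcal{L}')$ whose mirror is the affine hull of the base face of $\mathcal{L}'$; by definition this will show that $\mathcal{L}'$ has face mirrors. Let $P$ denote the affine hull of the base face $F_2$ of $\mathcal{L}$. Since $\mathcal{L}$ has face mirrors and the base-flag stabiliser of $G(\mathcal{L})$ has order $2$ (as recalled in the proof of Lemma~\ref{halfturn}), this stabiliser is generated by the reflection $R$ in $P$, and in particular $R$ lies in each of $G_0$, $G_1$, $G_2$. My plan has two steps: (i) show that the base face $F_2'$ of $\mathcal{L}'$ still lies in the plane $P$; and (ii) show that $R\in G(\mathcal{L}')$. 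Together these say exactly that the affine hull of $F_2'$ is the mirror of a plane reflection in $G(\mathcal{L}')$.

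For (i), I would first identify $R_0 R_2$ geometrically. Because $R_0\in G_{\{F_1,F_2\}}$ is a half-turn, its axis $L_0$ meets $F_1$ perpendicularly at the midpoint $p$ of $F_1$. Because $R_2\in G_{\{F_0,F_1\}}$ is a plane reflection fixing both endpoints of $F_1$, its mirror $M_2$ contains $F_1$. The hypothesis $L_0\perp M_2$ then forces $L_0\cap M_2=\{p\}$, so $R_0 R_2$ is the point reflection in $p$; in particular it is an involution, and $\lambda_0(R_2)$ is admissible. Next, Lemma~\ref{halfturn} says that $L_0$ is either contained in $P$ or perpendicular to $P$; combining this with $L_0\perp M_2$ and $F_1\subset P\cap M_2$, elementary plane geometry shows that $M_2$ either equals $P$ (when $L_0\perp P$) or is the unique plane through $F_1$ perpendicular to $P$ (when $L_0\subset P$). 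In both cases $R_2$ maps $P$ onto itself; and since $R_0$ and $R_1$ preserve $P$ as well (they stabilise $F_2$), the orbit of $F_0$ under $\langle R_0 R_2,R_1\rangle$---which by Wythoff's construction is the vertex set of $F_2'$---lies in $P$, so $F_2'\subset P$.

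Step (ii) is then essentially automatic. By the construction of $\mathcal{L}'$, the subgroup $H:=\langle R_0 R_2,R_1,G_2\rangle$ is contained in $G(\mathcal{L}')$; and since $R$ lies in the base-flag stabiliser, it belongs to $G_2\subseteq H$, so $R\in G(\mathcal{L}')$. Combined with (i), this says $R$ is a plane reflection in $G(\mathcal{L}')$ whose mirror is the affine hull of $F_2'$, which is precisely the statement that $\mathcal{L}'$ has face mirrors.

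The main obstacle I anticipate is the geometric dichotomy embedded in (i). The two cases of Lemma~\ref{halfturn} look rather different: in one of them $R_2$ actually coincides with $R$ itself, so the operation $\lambda_0(R_2)$ merely multiplies $R_0$ by the face-mirror reflection; in the other, $R_2$ and $R$ are distinct plane reflections whose mirrors are perpendicular. Both cases must be handled uniformly, and the key geometric fact enabling this---that the hypothesis $L_0\perp M_2$ together with Lemma~\ref{halfturn} forces $M_2$ to be either equal or perpendicular to $P$---is the crux of the proof. Once this dichotomy is established, the group-theoretic conclusion in (ii) collapses to the trivial observation that $R$ is in the flag stabiliser and hence in every $G_i$.
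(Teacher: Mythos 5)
Your proposal is correct and follows essentially the same route as the paper: both arguments show via Wythoff's construction that the base face of $\mathcal{L}'$ lies in the plane $P$ of the base face of $\mathcal{L}$, and then observe that the reflection in $P$ survives as a symmetry of $\mathcal{L}'$. The only (harmless) difference is that the paper skips your dichotomy for the mirror of $R_2$, noting instead that $R_0R_2$ is the point reflection in the midpoint of $F_1$, which lies in $P$ and hence preserves $P$ directly.
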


\begin{proof}
Let $L$ denote the plane through the base face $F_2$ of $\mathcal{L}$. Then the axis $L_1$ of the unique half-turn $R_1$ in $G_1(\mathcal{L})$ lies in $L$, since otherwise $F_2$ would be a linear apeirogon. Also, the mirrors of the two plane reflections in $G_1(\mathcal{L})$ are $L$ and the plane through $L_1$ perpendicular to $L$. In particular, $G_1(\mathcal{L})$ leaves $L$ invariant. Furthermore, the mirror of the point reflection $R_0 R_2$ is the midpoint of the base edge $F_1$, which is also contained in $L$. Since the symmetries of $\mathcal{L}$ are also symmetries of $\mathcal{L}'$, the reflection in $L$ is also a symmetry of $\mathcal{L}'$.

Now observe that the complex $\mathcal{L}'$ can be obtained from Wythoff's construction with the same initial vertex as for $\mathcal{L}$, namely the base vertex $F_0$ of $\mathcal{L}$, which also lies in $L$. Since the vertices of the base face $F_2'$ of $\mathcal{L}'$ are just the images of $F_0$ under the group generated by $R_0 R_2$ and $R_1$, the face $F_2'$ must entirely lie in $L$ and hence be planar. On the other hand, the reflection in $L$ is a symmetry of $\mathcal{L}'$. Thus $\mathcal{L}'$ also has face mirrors.
\end{proof}

Thus, in order to enumerate the simply flag-transitive complexes $\K$ with mirror vector $(0,k)$ and a dihedral subgroup $G_2$, it is sufficient to apply the operation $\lambda_0$ to the simply flag-transitive complexes $\mathcal{L}$ with mirror vector $(1,k)$ and with a dihedral subgroup $G_2(\mathcal{L})$ containing a plane reflection with mirror perpendicular to the axis of the half-turn $R_0$ in $G_0(\mathcal{L})$. However, when $\lambda_0$ is applied to a complex $\mathcal{L}$ of this kind, the resulting regular complex $\mathcal{L}^{\lambda_0}$ can actually have face-mirrors and hence be discarded for our present enumeration. The following lemma, applied with $\K=\mathcal{L}^{\lambda_0}$, describes a scenario when this will occur.

\begin{lemma}\label{facemirr0k}
Let $\K$ be a regular complex with a dihedral group $G_2$ such that $G_0$ contains a point reflection and $G_1$ contains a line or plane reflection fixing the mirror of a plane reflection in $G_2$. Then $\K$ has face mirrors.
\end{lemma}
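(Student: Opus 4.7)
The plan is to show that the base face $F_2$ is contained in the mirror $M$ of the given plane reflection $R\in G_2$; then $\mathrm{aff}(F_2)=M$ is the mirror of $R\in G(\K)$, so $\K$ has face mirrors by definition. Write $P\in G_0$ for the given point reflection and $Q\in G_1$ for the given line or plane reflection preserving $M$ setwise.

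First I would record the geometry of $P$ and $R$. Since $P$ stabilizes the edge $F_1$ and is a point reflection, its centre is the midpoint $m$ of $F_1$. Since $R\in G_2=G_{\{F_0,F_1\}}$ is an involution fixing $F_0$ and stabilizing $F_1$, it fixes both endpoints of $F_1$ and hence pointwise fixes the line $L_2$ through $F_1$; thus $L_2\subseteq M$ and in particular $F_0,m\in M$. It then follows that each of $P,Q,R$ leaves $M$ setwise invariant: trivially for $R$, by hypothesis for $Q$, and for $P$ because a point reflection centred at a point $m\in M$ sends $M$ to itself (if $M=m+W$ for a 2-plane $W$, then $P(m+w)=m-w\in M$).

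The crucial step is then to argue that the vertex set of $F_2$ coincides with the $\langle P,Q\rangle$-orbit of $F_0$. Since $P$ and $Q$ both lie in $G_{F_2}$, they project to symmetries of the regular polygon $F_2$: the image of $P$ is the central symmetry of $F_2$ about $m$ (as $P$ swaps the endpoints of $F_1$ while stabilizing $F_2$), the image of $Q$ is a reflection of $F_2$ fixing $F_0$ (as $Q$ swaps the two edges of $F_2$ at $F_0$), and the product $PQ$ sends $F_0$ to the adjacent vertex $v=P(F_0)$ along $F_1$. Hence the image of $PQ$ in $\mathrm{Sym}(F_2)$ is a one-step rotation of $F_2$, so $\langle P,Q\rangle$ surjects onto the full dihedral symmetry group of $F_2$ and its orbit through $F_0$ covers every vertex of $F_2$.

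Consequently every vertex of $F_2$ lies in $M$, and therefore $F_2\subseteq M$; since regular faces are never linear apeirogons, $\mathrm{aff}(F_2)$ is two-dimensional, forcing $\mathrm{aff}(F_2)=M$. As $R\in G(\K)$ is the plane reflection with mirror $M$, this identifies $\mathrm{aff}(F_2)$ with the mirror of a plane reflection in $G(\K)$, which is the face-mirror property. The principal obstacle is the combinatorial claim that $\langle P,Q\rangle$ acts transitively on the vertices of $F_2$; the rest of the argument is routine once one observes that the centre of $P$ automatically lies in $M$.
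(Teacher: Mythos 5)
Your proposal is correct and follows essentially the same route as the paper: both show that the point reflection in $G_0$ (whose centre, the midpoint of $F_1$, lies in the mirror $M$) and the reflection in $G_1$ together preserve $M$ and generate the base face as the orbit of $F_0$, forcing $F_2\subseteq M$ and hence $\mathrm{aff}(F_2)=M$. The only difference is presentational: the paper simply cites Wythoff's construction for the orbit claim, while you verify it directly via the surjection of $\langle P,Q\rangle$ onto the dihedral symmetry group of $F_2$.
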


\begin{proof}
The base face $F_2$ of $\K$ can be obtained by Wythoff's construction from the orbit of the base vertex $F_0$ under the subgroup generated by the point reflection $R_0$ in $G_0$ and the line or plane reflection $R_1$ in $G_1$ that fixes the mirror $L$ of a plane reflection in $G_2$. Since $R_0$ also fixes $L$, this subgroup must preserve $L$. Therefore $F_2$ must lie in $L$ and the reflection in $L$ must stabilize the base flag. Thus $\K$ has face mirrors.
\end{proof}

\subsection{Complexes with mirror vector~$(0,1)$ and dihedral $G_2$}
\label{k01}

We now appeal to our enumeration of the simply flag-transitive polygonal complexes $\K$ with mirror vector $(1,1)$ in Section~\ref{mirr11} to determine all simply flag-transitive regular polygonal complexes $\K$ with mirror vector $(0,1)$ and a dihedral subgroup $G_2$. This is the case $(0,k)$ for $k=1$.

First note that if $\mathcal{L}$ is a simply flag-transitive polygonal complex with mirror vector $(1,1)$ obtained from a simply flag-transitive complex with mirror vector $(1,2)$ by the operation $\lambda_1$ as in Section~\ref{lam}, then the axis of the half-turn $R_1$ for $\mathcal{L}$ must lie in the mirror of a plane reflection in $G_2(\mathcal{L})$, so in particular $R_1$ must leave this mirror invariant. But then Lemma~\ref{facemirr0k} implies that the regular complex obtained from any such  complex $\mathcal{L}$ by operation $\lambda_0$ must actually have face mirrors. Therefore, in enumerating simply flag-transitive complexes $\K$ with mirror vector $(0,1)$ we can restrict ourselves to applying $\lambda_0$ to those complexes $\mathcal{L}$ of Section~\ref{mirr11} that were not derived by operation $\lambda_1$, that is, the complexes $\mathcal{L}=\K_i(1,1)$ with $i \ge 5$ described in Section~\ref{notlam}.

Moreover, since the complexes $\K_i(1,1)$ with $i=6,7,9$ have a cyclic subgroup $G_2(\K_i(1,1))$, we need only consider the complexes $\K_5(1,1)$ and $\K_8(1,1)$, which have a corresponding dihedral subgroup isomorphic to $D_2$. However, the mirror arrangements of $R_1$, $R_2$ and $\hat{R}_2$ depicted in Figures~\ref{k511} and~\ref{k811} coincide (up to congruence), so these three generators for $\K_5(1,1)$ and $\K_8(1,1)$ are the same (up to conjugacy). Moreover, the mirror of $\hat{R}_2$ is perpendicular to the axis of the half-turn $R_0$ in both cases. Consequently, since the fourth generator for $\mathcal{L}^{\lambda_0}$ is just the point reflection in the midpoint of the base edge, we have $\K_5(1,1)^{\lambda_0} \cong \K_8(1,1)^{\lambda_0}$; that is, the two regular complexes $\K_5(1,1)^{\lambda_0}$ and $\K_8(1,1)^{\lambda_0}$ are the same (up to congruence).

Thus there is just one simply flag-transitive polygonal complex with mirror vector $(0,1)$ and a dihedral group $G_2$, namely
\begin{equation}
\K(0, 1) \,:=\,\K_5(1, 1)^{\,\lambda_0 (\hat{R}_2)},
\end{equation}
with the notation as in Figure~\ref{k511}.

Just as the original complex $\K_5(1,1)$, the complex $\K(0,1)$ has vertex-set $\Lambda_{(a,a,0)}$. Its edges are face diagonals of the standard cubical tessellation with vertex-set $a\mathbb{Z}^3$, and its faces are zigzags, four around each edge. The four faces that surround an edge occur in two pairs of co-planar zigzags. Using the notation of (\ref{genk511}) and Figure~\ref{k511} we observe that the symmetry $R_{0}\hat{R}_2\!\cdot\! R_1$ of the base face $F_2$ which ``shifts" the vertices of $F_2$ by one step along $F_2$, is a glide reflection whose square is the translation by $(2a,-a,a)$. In particular, the base face of $\K(0,1)$ is given by
\[ F_{2} = \{(-a,a,0), (0,0,0),(a,0,a)\} + \mathbb{Z}\!\cdot\!(2a,-a,a) , \]
where here $(-a,a,0)$ and $(a,0,a)$ are the two vertices of $F_2$ adjacent to the base vertex $(0,0,0)$. The vertex-figure of $\K(0,1)$ at $o$ coincides with the vertex-figure of $\K_5(1,1)$ at $o$, that is, with the edge-graph of  a non-standard cuboctahedron with skew square faces.

The complex $\K(0,1)$ is closely related to the semiregular tessellation $\mathcal{S}$ of $\E$ by regular tetrahedra and octahedra described in Section~\ref{terba}. In fact, the zigzag base face of $\K(0,1)$ lies in the plane $x+y-z=0$ and is a $2$-zigzag of the regular tessellation of this plane by triangles formed from faces of the $2$-skeleton of $\mathcal{S}$; each $2$-zigzag of this tessellation occurs as a face of $\K(0,1)$. Recall here that a $2$-{\em zigzag\/} is an edge-path which leaves a vertex at the second edge from the one by which it entered, but in the oppositely oriented sense at alternate vertices (see \cite[p.\! 196]{arp}). (The notion of a $2$-zigzag of a regular map is not to be confused with that of a zigzag face of a complex.)  More generally, each $2$-zigzag of a triangular tessellation induced by $\mathcal{S}$ on the affine hull of a triangular face of $\mathcal{S}$ is a face of $\K(0,1)$, and all faces of $\K(0,1)$ arise in this way. Note that, for any such induced  triangular tessellation, the faces of $\K(0,1)$ that are its $2$-zigzags form the faces in a compound of three regular maps each isomorphic to $\{\infty,3\}_6$; this map could also be  obtained from the triangular tessellation by applying, in succession (in any order), the Petrie operation and the second facetting operation of~\cite[p. 196]{arp}.

\subsection{Complexes with mirror vector~$(0,2)$ and dihedral $G_2$}
\label{k02}

Next we determine all simply flag-transitive regular polygonal complexes with mirror vector $(0,2)$ and a dihedral subgroup $G_2$, now appealing to the enumeration of the simply flag-transitive complexes with mirror vector $(1,2)$. From the classification of these complexes with mirror vector $(1,2)$ in \cite[Section 6.2]{pelsch} we know that the generator $R_1$ for the complex $\mathcal{L}=\K_i(1,2)$ with $i=3,5,6,7$ is a plane reflection that fixes the mirror of a plane reflection in $G_2$, namely the plane reflection (with mirror perpendicular to that of $R_1$) employed in Section~\ref{lam} to define the operation $\lambda_1$. Now, when $\lambda_0$ is applied to these complexes $\mathcal{L}$, the resulting complex meets the assumptions of Lemma~\ref{facemirr0k} and hence must necessarily have face mirrors. On the other hand, $\K_2(1,2)$ has a cyclic group $G_2(\K_2(1,2))$, so in particular $\lambda_0$ cannot even be applied. Hence, in enumerating the simply flag-transitive complexes $\K$ with mirror vector $(0,2)$ we need only consider the effect of $\lambda_0$ on the complexes $\K_1(1,2)$, $\K_4(1,2)$ and $\K_8(1,2)$.

Next we observe that the complex $\K_4(1,2)^{\lambda_0}$, with $\lambda_{0}:=\lambda_{0}(\widehat{R}_2)$ and $\widehat{R_2}$ as in \cite[eq.~(6.4)]{pelsch}, actually coincides with the $2$-skeleton of the regular $4$-apeirotope $\apeir \{3, 4\}$ in $\E$ and therefore has face mirrors. In this case $G(\K_4(1,2))$ acts simply flag transitively on the flags of the $2$-skeleton of $\apeir \{3,4\}$, and $G(\K_4(1,2)^{\lambda_0})$ is strictly larger than $G(\K_4(1,2))$ (see our discussion after Lemma~\ref{lambda1}). Thus we can exclude this possibility as well. This follows from arguments very similar to those described later in Section~\ref{mirr02cyc}, so we will not include any details here. It suffices to say that the reflection $T_3$ in the $xy$-plane normalizes the distinguished generating subgroups of the symmetry group $G(\K_4(1,2)^{\lambda_0})$ of the complex, and hence is itself a symmetry of the complex not contained in $G(\K_4(1,2)^{\lambda_0})$ but stabilizing the base flag (lying in the $xy$-plane).

We further note that the mirror configurations of $R_1$, $R_2$ and $\hat{R}_2$ shown in Figures 2 and 9 of \cite{pelsch} are the same (up to congruence) so that $\K_1(1,2)^{\lambda_0} \cong \K_8(1,2)^{\lambda_0}$, again with $\lambda_{0}=\lambda_{0}(\hat{R}_2)$ in both cases. Hence, as in the previous subsection there is just one simply flag transitive regular polygonal complex with mirror vector $(0,2)$, namely
\begin{equation}
\K(0, 2) \, :=\,\K_1(1, 2)^{\,\lambda_0 (\hat{R}_2)},
\end{equation}
with the notation as in Section~6.2 of \cite{pelsch}.

The vertex-set of $\K(0,2)$ is also $\Lambda_{(a,a,0)}$, just as for the original complex $K_1(1, 2)$. The edges of $\K(0,2)$ are again face diagonals of the cubical tessellation with vertex-set $a\mathbb{Z}^3$; the faces are zigzags, again four around an edge. As for $\K(0,1)$, the four faces around an edge occur in two pairs of co-planar zigzags.  With $R_{0}$, $R_1$ and $\hat{R}_2$ as in \cite[eq. (6.1)]{pelsch}, we now find that the symmetry $R_{0}\hat{R}_2\cdot R_1$ of $F_2$ which ``shifts" the vertices of $F_2$ by one step along $F_2$, is a twist whose square is the translation by $(a,a,0)$. In particular, the base face of $\K(0,2)$ lies in the plane $x+y-z=0$ and is given by
\[ F_{2} = \{(0,a,a), (0,0,0),(a,0,a)\} + \mathbb{Z}\!\cdot\!(a,a,0) , \]
where here $(0,a,a)$ and $(a,0,a)$ are the two vertices of $F_2$ adjacent to the base vertex $(0,0,0)$. The vertex-figure of $\K(0,2)$ at $o$ coincides with the vertex-figure of $\K_1(2,1)$ at $o$, that is, with the edge-graph of a cuboctahedron.

Just like $\K(0,1)$, the complex $\K(0,2)$ is also closely related to the semiregular tessellation $\mathcal{S}$ of $\E$ by regular tetrahedra and octahedra described in Section~\ref{terba}. In fact, the zigzag base face of $\K(0,2)$ is a Petrie polygon of the regular tessellation of the plane $x+y-z=0$ by triangles formed from faces of $\mathcal{S}$, and each such Petrie polygon occurs as a face of $\K(0,2)$. Recall here that a {\em Petrie polygon\/} (or $1$-{\em zigzag}) is a path along edges such that any two, but no three, consecutive edges lie in a common face (see \cite[p. 196]{arp}). More generally, each Petrie polygon of a triangular tessellation induced by $\mathcal{S}$ on the affine hull of a triangular face of $\mathcal{S}$ is a face of $\K(0,1)$, and all faces of $\K(0,1)$ arise in this way.

\section{Complexes with mirror vector~$(0,k)$ and cyclic $G_2$}
\label{mirrcyc}

In this section we complete the enumeration of the simply flag-transitive regular complexes with mirror vector $(0,1)$ or $(0,2)$. In Section~\ref{mir0k} we exploited the operation $\lambda_0$ of (\ref{opone}) to deal with the case when $G_{2}$ is dihedral, and derived the corresponding complexes via $\lambda_0$ from suitable regular complexes with mirror vectors $(1,1)$ or $(1,2)$, respectively. Here we concern ourselves with the remaining case when $G_{2}$ is a cyclic group. In particular, we prove that this contributes no new regular complexes to our list. We already know from \cite[Section 7E]{arp} that polyhedra (with an irreducible symmetry group) cannot have mirror vector $(0,1)$ or $(0,2)$.

Let $\K$ be an infinite simply flag-transitive regular complex with mirror vector $(0,k)$, with $k=1$ or $2$, and let its symmetry group $G$ be irreducible. Then $R_0$ is the point reflection in the midpoint of the base edge $F_1$ of $\K$, and $R_{1}$ is a half-turn or plane reflection, depending on whether $k=1$ or $2$, with its mirror passing through the base vertex $F_0$ of $\K$. Now it is immediately clear that $\K$ must have (planar) zigzag faces. In fact, in the special group $G_*$ of $G$ we must have $R_0'=-I$ and hence $(R_{0}'R_{1})^{2}=I$, the identity mapping on $\E$, so modulo $G_*$ the basic cyclic symmetry $R_{0}R_{1}$ of the base face $F_2$ of $\K$ has only period $2$. Moreover, since $G_*$ contains the central inversion $-I$, we must have $G_{*}= [3,3]^{*}$ or $[3,4]$.

Now suppose the pointwise stabilizer $G_2$ of $F_1$ is a cyclic group generated by a rotation $S$ of period $r$. We show that all these data together already imply that $\K$ could only be the $2$-skeleton of a regular $4$-apeirotope in $\E$ (see (\ref{4apeirotopes})). However, this is impossible, since the latter is not a simply flag-transitive complex (see \cite[Section 4]{pelsch}). More precisely, we will establish that $G$ would have to be a flag-transitive subgroup of index $2$ in the full symmetry group of the $2$-skeleton of a regular $4$-apeirotope. Recall that the eight regular $4$-apeirotopes come in pairs of Petrie-duals, and that the apeirotopes in each pair have the same $2$-skeleton. Thus for our purposes it suffices to consider the apeirotopes $\apeir {\mathcal Q}$ with $\mathcal Q$ equal to $\{3,3\}$, $\{3,4\}$ or $\{4,3\}$.

\subsection{Mirror vector $(0,1)$ and cyclic $G_2$}

We begin with the case $k=1$. Suppose $\mathcal{K}$ is a simply flag-transitive regular complex with mirror vector $(0,1)$ and a cyclic group $G_2$ with rotational generator $S$. Recall our standing assumption that $G$ is irreducible. Then $R_0$ is a point reflection in the midpoint of the base edge $F_1$; $R_1$ is a half-turn about a line through the base vertex $F_{0}:=o$; and $S$ is a rotation about a line containing $F_1$. Moreover, as we remarked earlier, the faces of $\mathcal{K}$ are planar zigzags. In particular, this forces the rotation axis of $R_1$ to lie in the plane containing the base face $F_2$ (the axis of $R_1$ cannot be perpendicular to this plane, as no regular complex can have linear apeirogons as faces). Let $T_3$ denote the reflection in the plane through $F_2$. Then $T_3$ fixes each of $F_0$, $F_1$ and $F_2$ but does not lie in $G$; otherwise $T_3$ would actually belong to $G_2$ and make $G_2$ a dihedral group, contrary to our assumption (alternatively, as $T_3$ stabilizes the base flag, it would have to be trivial, by the simple flag-transitivity of $G$). However, we can also prove that $T_3$ must be a symmetry of $\mathcal{K}$, so $\mathcal{K}$ cannot have been simply-transitive. In fact, $\mathcal{K}$ must be the $2$-skeleton of a regular $4$-apeirotope in $\E$ since it has face-mirrors (for example, $T_3$), and $G$ must be a flag-transitive proper subgroup of the full symmetry group of $\mathcal{K}$, the latter being that of the $4$-apeirotope.

The proof hinges on the observation that $T_3$ commutes with the involutory generators $R_0$ and $R_1$ of $G$ and, up to taking inverses, with the (possibly non-involutory) generator $S$ as well, since $T_{3}ST_{3}=S^{-1}$. Note here that the mirror of $T_3$ contains the invariant point of $R_0$ and the rotation axes of $R_1$ and $S$. Now as the vertices, edges and faces of $\mathcal{K}$ are just the images of $F_{0}$, $F_{1}$ and $F_{2}$ under the elements of $G$, and these elements commute with $T_3$ up to taking inverses, we find that $T_3$ takes vertices, edges or faces of $\mathcal{K}$ to vertices, edges or faces of $\mathcal{K}$, respectively; more explicitly, if $R$ is any element in $G$ and $\widehat{R}$ its conjugate under $T_3$ (in the isometry group of $\E$), then $\widehat{R}$ lies in $G$ and $(F_{j}R)T_{3}=(F_{j}T_{3})\widehat{R}=F_{j}\widehat{R}$ for each $j$. Thus $T_3$ is actually a reflective symmetry of $\mathcal{K}$ leaving the plane through $F_2$ invariant; in particular, this plane is a face mirror of $\mathcal{K}$.

In summary, there are no simply flag-transitive regular complexes with mirror vector $(0,1)$ and a cyclic group $G_2$.

A more detailed analysis of the geometric situation above sheds some light on possible characterizations of $2$-skeletons of regular $4$-apeirotopes. In fact, there are just two possible choices for the special group $G_*$, namely $[3,3]^*$ and $[3,4]$, allowing for complexes $\K$ with $r=3$ or with $r=3$ or $4$, respectively. Suppose we pick the cube $C:=\{4,3\}$ with vertices $(\pm 1,\pm 1,\pm 1)$ as a reference figure for the action of $G_*$. Now if $G_{*}=[3,3]^*$ then $\K$ must necessarily be the $2$-skeleton of the regular $4$-apeirotope
\[ \apeir \{3, 3\} = \{\{\infty, 3\}_6 \# \{ \, \}, \{3, 3\}\} ,\]
with the tetrahedral vertex-figure $\mathcal{Q}:=\{3,3\}$ determined by one of the two sets of alternating vertices of $C$. On the other hand, if $G_{*}=[3,4]$ the outcome depends on the period of $S$ (that is, on $r$). If the period of $S$ is $3$, then $\K$ must be the $2$-skeleton of
\[ \apeir \{4, 3\} = \{\{\infty, 4\}_4 \# \{ \, \}, \{4, 3\}\}, \]
now with the cubical vertex-figure $\mathcal{Q}:=\{3,3\}$ given by $C$ itself. However, if the period of $S$ is $4$, then $\K$ must be the $2$-skeleton of
\[ \mathcal{P} = \apeir \{3,4\} = \{\{\infty, 3\}_6 \# \{ \, \}, \{3,4\}\} ,\]
with the octahedral vertex-figure $\mathcal{Q}:=\{3,4\}$ dually positioned to $C$ such that its vertices are at the centers of the faces of $C$. Notice that our choice of notation, $T_3$, for the reflection in the plane of $F_2$ was deliberate
to indicate its role as a generator of the symmetry group of the $4$-apeirotope.

\subsection{Mirror vector $(0,2)$ and cyclic $G_2$}
\label{mirr02cyc}

Similarly we deal with the mirror vector $(0,2)$. Let $\mathcal{K}$ be a simply flag-transitive regular complex with mirror vector $(0,2)$ and a cyclic group $G_2$ with rotational generator $S$. Then $R_0$ is the reflection in the midpoint of $F_1$; $R_1$ is a reflection in a plane through $F_{0}:=o$; and $S$ is a rotation about the line containing $F_1$. Now let $T_3$ denote the reflection in the plane through the (zigzag) base face $F_2$. Then this plane is perpendicular to the mirror of $R_1$ and also contains the invariant point of $R_0$ and the rotation axis of $S$. Thus  $T_3$ stabilizes the base flag $\{F_{0},F_{1},F_{2}\}$. However, $T_3$ cannot belong to $G$, since otherwise $G_2$ would be dihedral, not cyclic. On the other hand, $T_3$ again commutes with the generators $R_0$ and $R_1$ of $G$ and, up to taking inverses, with the generator $S$ as well (that is $T_{3}S=S^{-1}T_{3}$); note here that the mirror of $T_3$ is perpendicular to the mirror of $R_1$ and contains the mirrors of $R_0$ and $S$. It follows that we can proceed exactly as for the mirror vector $(0,1)$ to show that $T_3$ is actually a reflective symmetry of $\mathcal{K}$ leaving the plane through $F_2$ invariant and making it a face mirror. Hence, $\mathcal{K}$ cannot have been simply-transitive and must be the $2$-skeleton of a regular $4$-apeirotope in $\E$. Moreover, $G$ must be a flag-transitive proper subgroup of the full symmetry group of $\mathcal{K}$, the latter being that of the underlying $4$-apeirotope.

Thus no simply flag-transitive regular complex can have a mirror vector $(0,2)$ and a cyclic group $G_2$.

A further analysis shows that the operation
\begin{equation}\label{opt3}
\lambda \!:\;\,  (R_0, R_1, S)\; \mapsto\; (R_0, T_3R_1, S)
\end{equation}
on the generators of the underlying groups interchanges the two possible choices of mirror vectors $(0,1)$ and $(0,2)$ if $G_2$ is cyclic. Thus, in some sense, these cases are equivalent. The operation (\ref{opt3}) actually applies to the symmetry group of the corresponding regular $4$-apeirotope, where it corresponds to performing the Petrie operation (on the vertex-figure). As the eight regular $4$-apeirotopes in $\E$ come in pairs of Petrie duals sharing a common $2$-skeleton (see \cite[Theorem~4.3]{pelsch}), the three apeirotopes with~zigzag faces associated with the complexes with mirror vector $(0,2)$ (and cyclic $G_2$) occur here in the form $\apeir {\mathcal Q}$ with $\mathcal{Q}=\{4,3\}_3$, $\{6,4\}_3$ or $\{6,3\}_4$, these being the Petrie duals of $\{3,3\}$, $\{3,4\}$ or $\{4,3\}$, respectively.
\bigskip

The following theorem summarizes our discussion for the mirror vectors $(0,k)$ with $k=1,2$. Note that there are also regular polyhedra with these mirror vectors; for example, the Petrie duals of the regular plane tessellations $\{4,4\}$, $\{3,6\}$ and $\{6,3\}$ have mirror vector $(0,1)$ when viewed in the plane.

\begin{theorem}
\label{classif0k}
Apart from polyhedra, the complexes $\K(0,1)$ and $\K(0,2)$ described in Sections~\ref{k01} and~\ref{k02} are the only simply flag-transitive regular polygonal complexes with mirror vectors $(0,1)$ or $(0,2)$, respectively.
\end{theorem}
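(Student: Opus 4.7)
The plan is to dichotomize according to whether the pointwise edge-stabilizer $G_2$ of a hypothetical simply flag-transitive regular complex $\K$ with mirror vector $(0,k)$ is cyclic or dihedral, and then to invoke the machinery built in Sections~\ref{mir0k} and~\ref{mirrcyc}. The cyclic case is settled directly by Sections~5.1 and~5.2: the base face $F_2$ is necessarily a planar zigzag, and the reflection $T_3$ in the plane of $F_2$ stabilizes the base flag, cannot lie in $G$ (else $G_2$ would be dihedral), yet commutes with $R_0$ and $R_1$ and normalizes $\langle S\rangle$. Hence $T_3$ is an extra symmetry of $\K$, contradicting simple flag-transitivity. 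This entirely eliminates the cyclic possibility.

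For the dihedral case, Lemma~\ref{apptwo}, read in reverse, guarantees that $\K = \mathcal{L}^{\lambda_0}$ for some regular complex $\mathcal{L}$ with mirror vector $(1,k)$, where $\lambda_0=\lambda_0(R)$ is induced by a plane reflection $R\in G_2(\mathcal{L})$ whose mirror is perpendicular to the axis of the half-turn in $G_0(\mathcal{L})$. Lemma~\ref{facemirlambda01} rules out the possibility that $\mathcal{L}$ has face mirrors, since otherwise $\K$ itself would acquire face mirrors, contrary to simple flag-transitivity. Thus $\mathcal{L}$ is simply flag-transitive, and one may consult the classifications of simply flag-transitive complexes with mirror vector $(1,1)$ established in Section~\ref{mirr11} and with mirror vector $(1,2)$ in \cite{pelsch}.

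For each candidate $\mathcal{L}$ in these lists, there are three ways to discard it: either $G_2(\mathcal{L})$ is cyclic (so the operation $\lambda_0$ has no suitable plane reflection available); or the axis of $R_1$ lies in the mirror of a plane reflection of $G_2(\mathcal{L})$, in which case Lemma~\ref{facemirr0k} forces $\mathcal{L}^{\lambda_0}$ to have face mirrors; or $\mathcal{L}^{\lambda_0}$ turns out to coincide with the $2$-skeleton of a regular $4$-apeirotope (again with face mirrors). After these eliminations, the survivors for $k=1$ are $\K_5(1,1)$ and $\K_8(1,1)$, whose mirror configurations of $R_1,R_2,\hat{R}_2$ are congruent, so $\K_5(1,1)^{\lambda_0}\cong \K_8(1,1)^{\lambda_0}=\K(0,1)$; for $k=2$ the survivors are $\K_1(1,2)$ and $\K_8(1,2)$, giving the single complex $\K(0,2)$, together with $\K_4(1,2)$, whose image under $\lambda_0$ is excluded because it coincides with the $2$-skeleton of $\apeir\{3,4\}$.

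The main obstacle is the careful case-by-case bookkeeping in the dihedral step: verifying for every entry in the two lists which of the three discarding mechanisms applies, comparing explicit generator data and mirror diagrams to establish the congruences $\K_5(1,1)^{\lambda_0}\cong\K_8(1,1)^{\lambda_0}$ and $\K_1(1,2)^{\lambda_0}\cong\K_8(1,2)^{\lambda_0}$, and confirming that the apparent extra candidate $\K_4(1,2)^{\lambda_0}$ really does coincide with the $2$-skeleton of $\apeir\{3,4\}$ and so must be rejected. Once these identifications are in hand, the two surviving complexes are exactly $\K(0,1)$ and $\K(0,2)$, proving the theorem.
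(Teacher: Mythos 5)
Your proposal is correct and follows essentially the same route as the paper: the theorem is proved there precisely by combining the dihedral case of Section~\ref{mir0k} (reversing $\lambda_0$ via Lemma~\ref{apptwo}, excluding face mirrors via Lemmas~\ref{facemirlambda01} and~\ref{facemirr0k}, and checking the surviving candidates $\K_5(1,1)\cong_{\lambda_0}\K_8(1,1)$ and $\K_1(1,2)\cong_{\lambda_0}\K_8(1,2)$ while discarding $\K_4(1,2)^{\lambda_0}=skel_2(\apeir\{3,4\})$) with the cyclic case of Section~\ref{mirrcyc} (the $T_3$ normalization argument forcing $\K$ to be a $2$-skeleton of a regular $4$-apeirotope). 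The three discarding mechanisms you list are exactly those used in the paper.
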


\section{Complexes with mirror vector~$(2,k)$}
\label{mir2k}

Our last step is the enumeration of the simply flag-transitive complexes $\K$ with mirror vector $(2,k)$. According to Lemma~\ref{appone}, each such complex can be obtained from a regular complex $\mathcal{L}$ which either has face mirrors or is simply flag-transitive with mirror vector $(0,k)$, by applying to $\mathcal{L}$ the operation $\lambda_0$ with respect to a half-turn in $G_2(\mathcal{L})$.

The next lemma, applied with $\mathcal{L}=\K^{\lambda_0}$ and $\mathcal{L}'=\K$, rules out the possibility that the complex $\mathcal{L}$ has face-mirrors.

\begin{lemma}\label{facemirlambda02}
Let $\mathcal{L}$ be a regular complex with face mirrors. Assume that $G_0(\mathcal{L})$ contains a point reflection $R_0$. Let $\mathcal{L}'$ denote the regular complex obtained from $\mathcal{L}$ by the operation $\lambda_0(R_2)$, where $R_2$ is the unique half-turn in $G_2(\mathcal{L})$. Then $\mathcal{L}'$ also has face mirrors.
\end{lemma}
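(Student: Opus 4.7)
The plan is to mimic the proof of Lemma~\ref{facemirlambda01} almost verbatim, adapted to the fact that here $R_0$ is a point reflection and $R_2$ is a half-turn (rather than a half-turn and a plane reflection, respectively). The key observation will be that the product $R_0 R_2$ is again a plane reflection, whose mirror is perpendicular to the plane $L$ through the base face $F_2$ of $\mathcal{L}$; this will force, via Wythoff's construction, the new base face $F_2'$ of $\mathcal{L}'$ to lie in $L$ as well.

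First I would identify the two isometries geometrically. The point reflection $R_0 \in G_0(\mathcal{L})$ stabilizes $F_1$ but not $F_0$, so its center must be the midpoint $c$ of $F_1$; by Lemma~\ref{halfturn} we have $c \in F_2 \subset L$. Because $G_2(\mathcal{L})$ fixes both endpoints of $F_1$, it fixes the line $L_2$ through $F_1$ pointwise, so any half-turn it contains must have axis $L_2$; in particular $R_2$ is the half-turn about $L_2$. Placing $c$ at the origin with $L_2$ along the $x$-axis, a direct computation gives $R_0 R_2(x,y,z)=(-x,y,z)$, so that $R_0 R_2$ is the plane reflection in the plane $M$ through $c$ perpendicular to $L_2$; this also confirms the involution hypothesis needed to apply $\lambda_0(R_2)$.

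To conclude, since $L_2 \subset L$, the plane $M$ is perpendicular to $L$, and therefore reflection in $M$ preserves $L$; also $R_1 \in G_1(\mathcal{L})$ stabilizes $F_2$ and hence preserves $L$. Because $F_0 \in L$ and the face stabilizer of $F_2'$ in $G(\mathcal{L}')$ is generated by $R_0 R_2$ and $R_1$, Wythoff's construction places $F_2'$ entirely inside $L$, co-planar with $F_2$. The reflection $T$ in $L$ is a symmetry of $\mathcal{L}$ (as $L$ is a face mirror of $\mathcal{L}$), and it remains a symmetry of $\mathcal{L}'$ since $G(\mathcal{L}) \subseteq G(\mathcal{L}')$ by Lemma~\ref{lambda1}. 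It pointwise fixes $F_0$, $F_1$, and $F_2'$, so the base flag of $\mathcal{L}'$ has non-trivial stabilizer; by the characterization of regular complexes with face mirrors as exactly those with non-trivial flag stabilizer (see~\cite[\S 3]{pelsch}), $\mathcal{L}'$ has face mirrors. The only step requiring care is the coordinate identification that pins down $R_0 R_2$ as the plane reflection in $M$ with $M \perp L$; everything else is formally parallel to the proof of Lemma~\ref{facemirlambda01}.
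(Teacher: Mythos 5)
Your proposal is correct and follows essentially the same route as the paper, which likewise reduces the argument to two facts: that $R_0R_2$ is a plane reflection with mirror perpendicular to the plane $L$ of the base face, and that $G_1(\mathcal{L})$ preserves $L$, so that Wythoff's construction keeps the new base face inside $L$. You merely make explicit the coordinate computation identifying $R_0R_2$ that the paper leaves implicit, which is a welcome addition rather than a deviation.
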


\begin{proof}
The proof is similar to that of Lemma \ref{facemirlambda01}. It suffices to note that all elements in $G_1(\mathcal{L})$ fix the plane $L$ through the base face of $\mathcal{L}$, and that $R_0 R_2$ is a reflection in a plane perpendicular to $L$. Then it follows that the base face of $\mathcal{L}'$ also lies in $L$, so $\mathcal{L}'$ also has face-mirrors.
\end{proof}

Thus we may concentrate on the simply flag-transitive complexes $\mathcal{L}$ with mirror vector $(0,k)$. We know from Theorem~\ref{classif0k} that there is just one such complex for each $k$.

\subsection{Complexes with mirror vector~$(2,1)$}

We first derive the unique simply flag-transitive complex with mirror vector $(2,1)$ from $\mathcal{L}=\K(0,1)$ by means of the operation $\lambda_0(R_2)$ associated with the half-turn $R_2$ in the dihedral group $G_2(\mathcal{L})$:
\begin{equation}
\K(2, 1) := \K(0, 1)^{\,\lambda_0 (R_2)}.
\end{equation}

The vertex-set of $\K(2,1)$ is again $\Lambda_{(a,a,0)}$, as for the original complex $\K(0,1)$. The edges of $\K(0,1)$ are face diagonals of the standard cubical tessellation with vertex-set $a\mathbb{Z}^3$, and the faces are convex regular hexagons in planes perpendicular to main diagonals of $a\mathbb{Z}^3$. There are four faces around an edge, such that opposite faces are co-planar. Computing the distinguished generators for $\K(2,1)$ from those of $\K(0,1)$ (which, in turn, are based on the generators for $\K_5(1,1)$ described in (\ref{genk511})), we find that the vertices of the hexagonal base face of $\K(2,1)$ are given by
\[(0,0,0), (a,0,a), (a,a,2a), (0,2a,2a), (-a,2a,a), (-a,a,0),\]
in this order. The base face is centered at the point $(0,a,a)$ of $\Lambda_{(a,a,0)}$ and forms an equatorial hexagon of the cuboctahedron whose vertices are the midpoints of the edges of the $2a\!\times\!2a\!\times\!2a$ cube with center at $(0,a,a)$. Note that $(0,a,a)$ is the common center (but not a vertex) of four faces of $\K(2,1)$, each an equatorial hexagon of the  cuboctahedron just described. The vertex-figure of $\K(2,1)$ at $o$ coincides with the vertex-figure of $\K(0,1)$ at $o$, that is, with the edge graph of a non-standard cuboctahedron with skew square faces.

As with the complexes $\K(0,1)$ and $\K(0,2)$, the geometry of $\K(2,1)$ can be described in terms of the semiregular tessellation $\mathcal{S}$ of $\E$ by regular tetrahedra and octahedra (see Section~\ref{terba}). The hexagonal base face of $\K(2,1)$ also lies in the plane $x+y-z=0$ and is just a $2$-hole of the regular tessellation of this plane by triangles formed from faces of $\mathcal{S}$; in fact, each $2$-hole of this tessellation is a face of $\K(2,1)$. Recall here that a $2$-{\em hole\/}, or simply a {\em hole\/}, of any regular map on a surface, is a path along edges that successively take the second exit on the left (in a local orientation), at each vertex. For a regular tessellation by triangles, the $2$-holes are just the boundary edge-paths of convex hexagons comprising the six triangles with a common vertex. More generally, each face of $\K(2,1)$ is a hole of the regular tessellation of its affine hull by triangles formed from faces of $\mathcal{S}$, and each such hole is a face of~$\K(2,1)$.

\subsection{Complexes with mirror vector~$(2,2)$}
\label{mirr22com}

Finally, then, we construct the unique simply flag-transitive complex with mirror vector $(2,2)$ from $\mathcal{L}=\K(0,2)$ by applying the operation $\lambda_0 (R_2)$, where again $R_2$ denotes the unique half-turn in the dihedral group $G_2(\mathcal{L})$:
\begin{equation}
\K(2, 2) \,:=\, \K(0, 2)^{\,\lambda_0 (R_2)}.
\end{equation}

As for the original complex $\K(0, 2)$, the vertex-set of $\K(2,2)$ is $\Lambda_{(a,a,0)}$ and the edges are again face diagonals of the cubical tessellation with vertex-set $a\mathbb{Z}^3$. The faces are triangles, four around each edge such that opposite triangles are co-planar. Now the distinguished generators for $\K(2,2)$ can be obtained from those of $\K(0,2)$, which, in turn, are based on the generators for $\K_1(1,2)$ described in \cite[eq. (6.1)]{pelsch}. The base face of $\K(2,2)$ lies in the plane $x+y-z=0$ and has vertices
\[(0,0,0), (a,0,a), (0,a,a).\]
The vertex-figure of $\K(2,1)$ at $o$ coincides with the vertex-figure of $\K(0,2)$ at $o$, that is, with the edge-graph of a cuboctahedron.

The complex $\K(2,2)$ can best be visualized as the $2$-skeleton of the semiregular tessellation $\mathcal{S}$ of $\E$ by regular tetrahedra and octahedra described in Section~\ref{terba}. The faces of $\mathcal{S}$ are regular triangles, each shared by an octahedral tile and a tetrahedral tile of $\mathcal{S}$. It is straightforward to check that $\K(2,2)$ is just the $2$-skeleton of $\mathcal{S}$.
\bigskip

Inspection of the list of simply flag-transitive regular complexes shows that there are just two complexes with finite planar (in fact, convex) faces, namely $\K(2,2)$ with triangular faces and $\K(2,1)$ with hexagonal faces (see also Table~\ref{tabone}). The complex $\K(2,2)$ can be viewed as the $2$-skeleton of the semiregular tessellation $\mathcal{S}$ by regular tetrahedra and octahedra, and hence is a geometric complex embedded (without self-intersections) in $\E$. On the other hand, $\K(2,1)$ can not be viewed as a geometric complex embedded in $\E$. In fact, every vertex of $\K(2,1)$ is the common center of the four faces of $\K(2,1)$ given by the equatorial hexagons of a suitable cuboctahedron with this vertex as center; when the hexagonal faces of $\K(2,1)$ are viewed as bounding convex hexagons, these four hexagons all intersect in their common center, and hence self-intersections do occur in this case.
\bigskip

In conclusion, we have established the following theorem.

\begin{theorem}
\label{classif2k}
Apart from polyhedra, the complexes $\K(2,1)$ and $\K(2,2)$ described in this section are the only simply flag-transitive regular polygonal complexes with mirror vectors $(2,1)$ or $(2,2)$, respectively.
\end{theorem}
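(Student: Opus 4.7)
The plan is to invert the construction of $\K(2,k)$ from $\K(0,k)$ carried out in Section~\ref{mir2k}. Let $\K$ be an arbitrary infinite simply flag-transitive regular polygonal complex with affinely irreducible symmetry group $G$ and mirror vector $(2,k)$ for some $k\in\{1,2\}$, which is not a polyhedron (so $|G_2|=r\geq 3$). Applying Lemma~\ref{appone}, the edge stabilizer $G_2$ contains a half-turn $R$, and the new complex $\mathcal{L}:=\K^{\lambda_0(R)}$ is a regular complex that either possesses face mirrors or is simply flag-transitive with mirror vector $(0,k)$; in either case $\K=\mathcal{L}^{\lambda_0}$, with the inverse operation interpreted per the convention of Section~\ref{terba}.

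The next step is to rule out the possibility that $\mathcal{L}$ has face mirrors. Because $R_0$ is a plane reflection and $R$ is a half-turn for which $R_0 R$ is an involution, the axis of $R$ must be perpendicular to the mirror of $R_0$, so that $R_0 R$ is the point reflection in the midpoint of the base edge and lies in $G_0(\mathcal{L})$. If $\mathcal{L}$ were to have face mirrors, then Lemma~\ref{facemirlambda02}, applied with this point reflection and with $R$ serving as the unique half-turn of $G_2(\mathcal{L})$, would imply that $\mathcal{L}^{\lambda_0}=\K$ also has face mirrors, contradicting simple flag-transitivity of $\K$. Hence $\mathcal{L}$ must be simply flag-transitive with mirror vector $(0,k)$.

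Theorem~\ref{classif0k} then pins down $\mathcal{L}$ uniquely: since no regular polyhedron with mirror vector $(0,k)$ has an affinely irreducible group (the relevant polyhedral examples, namely the Petrie duals of the regular plane tessellations, are reducible), the only possibility is $\mathcal{L}=\K(0,k)$. Because the dihedral group $G_2(\K(0,k))\cong D_2$ contains exactly one half-turn, namely the $R_2$ used to define $\K(2,k)$ in Section~\ref{mir2k}, we must have $R=R_2$, and therefore $\K=\mathcal{L}^{\lambda_0}=\K(0,k)^{\lambda_0(R_2)}=\K(2,k)$, as required.

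The principal obstacle, as I see it, is the second paragraph: one must verify carefully that the hypotheses of Lemma~\ref{facemirlambda02} genuinely hold for $\mathcal{L}$ under the stated convention on the inverse $\lambda_0$ acting on a face-mirrored complex, and in particular that the half-turn $R$ produced by Lemma~\ref{appone} is indeed the unique half-turn appearing in $G_2(\mathcal{L})$, so that the conclusion of the lemma really does force $\K$ to have face mirrors. Once this compatibility check is secured, the remainder of the proof is an immediate appeal to the classification already achieved in Theorem~\ref{classif0k} and to the description of $\K(2,k)$ given in Section~\ref{mir2k}.
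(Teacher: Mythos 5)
Your proposal is correct and follows essentially the same route as the paper: reduce to mirror vector $(0,k)$ via Lemma~\ref{appone}, exclude the face-mirror case by applying Lemma~\ref{facemirlambda02} with $\mathcal{L}=\K^{\lambda_0}$ and $\mathcal{L}'=\K$, and then invoke Theorem~\ref{classif0k} together with the uniqueness of the half-turn in $G_2(\K(0,k))\cong D_2$. The compatibility check you flag in your last paragraph is precisely the point the paper also relies on (and treats only briefly), so no new idea is missing.
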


\section{The enumeration} 

The following theorem summarizes our enumeration of regular polygonal complexes in euclidean $3$-space.

\begin{theorem}
\label{fullclassif}
Up to similarity, there are exactly 25 regular polygonal complexes in $\E$ which are not regular polyhedra, namely 21 simply flag-transitive complexes and $4$ complexes which are $2$-skeletons of regular $4$-apeirotopes in~$\E$.
\end{theorem}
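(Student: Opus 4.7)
The proof will be essentially a bookkeeping argument assembling the partial enumerations already carried out in Part I and in Sections~\ref{mirr11}--\ref{mir2k} of the present paper. The plan is to split the class of regular polygonal complexes in $\E$ which are not regular polyhedra into two mutually exclusive families according to the size of the flag stabilizer, and then count the members of each family.

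First I would recall from \cite{pelsch} that a regular polygonal complex in $\E$ which is not a polyhedron is automatically infinite and has an affinely irreducible symmetry group, and that its group either acts simply flag-transitively or has flag-stabilizers of order~$2$. In the latter case, the complex must have face mirrors and, by \cite[Theorem~4.3]{pelsch}, must coincide with the $2$-skeleton of one of the eight regular $4$-apeirotopes in~$\E$ listed in~(\ref{4apeirotopes}). Since these apeirotopes come in four pairs of Petrie duals sharing a common $2$-skeleton, this yields exactly $4$ complexes with non-trivial flag stabilizer.

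The main step is to tabulate the simply flag-transitive complexes $\K$ by their mirror vector $(\dim(R_0),\dim(R_1))$. Because $R_0$ is a point, line, or plane reflection and $R_1$ is a line or plane reflection, the mirror vector lies in $\{0,1,2\}\times\{1,2\}$, so only six cases arise. I would then assemble the counts: mirror vector $(1,2)$ contributes the eight complexes $\K_1(1,2),\dots,\K_8(1,2)$ enumerated in \cite[\S 6]{pelsch}; mirror vector $(1,1)$ contributes the nine complexes $\K_1(1,1),\dots,\K_9(1,1)$ of Theorem~\ref{appthree}-based Section~\ref{mirr11} (via Theorem~3.1 of the present paper); mirror vectors $(0,1)$ and $(0,2)$ contribute one complex each, namely $\K(0,1)$ and $\K(0,2)$, by Theorem~\ref{classif0k} (with the dihedral case handled in Sections~\ref{k01}--\ref{k02} and the cyclic case ruled out in Section~\ref{mirrcyc}); and mirror vectors $(2,1)$ and $(2,2)$ contribute one complex each, namely $\K(2,1)$ and $\K(2,2)$, by Theorem~\ref{classif2k}. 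Summing gives $8+9+1+1+1+1=21$ simply flag-transitive complexes, and adding the $4$ non-simply flag-transitive complexes produces the stated total of~$25$.

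The only delicate point, which I would flag explicitly, is to verify that the enumeration is exhaustive in the sense that no mirror vector has been overlooked and that the uniqueness statements within each Theorem~\ref{classif0k} and Theorem~\ref{classif2k} (and the corresponding results in Section~\ref{mirr11} and in \cite{pelsch}) are compatible with one another --- in particular, that the operations $\lambda_0$ and $\lambda_1$ have been applied in such a way that no complex is counted twice under different labels. Since each of these operations is involutory (under the convention adopted after Lemma~\ref{lambda1}) and is used only to translate between mirror vectors that are disjoint by definition, this is automatic, and the theorem follows. I would then close with the observation that this list is complete up to similarity, as the classification at each stage has been carried out up to conjugacy of the distinguished generators.
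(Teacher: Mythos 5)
Your proposal is correct and follows essentially the same route as the paper, which presents Theorem~\ref{fullclassif} as a summary assembled from the characterization of the complexes with non-trivial flag stabilizers as the $2$-skeletons of the four Petrie-dual pairs of regular $4$-apeirotopes (from Part~I) together with the mirror-vector-by-mirror-vector counts $8+9+1+1+1+1=21$ for the simply flag-transitive case. Your explicit check that the six possible mirror vectors $\{0,1,2\}\times\{1,2\}$ are exhaustive and that the involutory operations $\lambda_0$, $\lambda_1$ cannot cause double counting across mirror-vector classes is exactly the bookkeeping the paper leaves implicit.
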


Table~\ref{tabone} organizes the $21$ simply flag-transitive polygonal complexes by mirror vectors and includes for each complex the data about the pointwise edge stabilizer $G_2$, the number $r$ of faces surrounding an edge, the structure of faces and vertex-figures, the vertex-set, and the structure of the special group. The symbols $p_c$, $p_s$, $\infty_2$, or $\infty_k$ with $k=3$ or $4$, respectively, in the face column indicate that the faces are {\em convex} $p$-gons, {\em skew} $p$-gons, planar zigzags, or helices over {\em $k$-gons}. (In some sense, a planar zigzag is a helix over a $2$-gon, hence our notation. Clearly, the suffix in $3_c$ is redundant.)  We also set
\[ V_{a}:=a\mathbb{Z}^{3}\!\setminus\! ((0,0,a)\!+\!\Lambda_{(a,a,a)}),\;\;
W_{a}:= 2\Lambda_{(a,a,0)} \cup ((a,-a,a)\!+\!2\Lambda_{(a,a,0)}), \]
to have a short symbol available for some of the vertex-sets. The vertex-figures of polygonal complexes are finite graphs, so an entry in the vertex-figure column describing a solid figure is meant to represent the edge-graph of this figure, with ``double" indicating the double edge-graph. The abbreviation ``ns-cuboctahedron" stands for (the edge graph of) the ``non-standard cuboctahedron" (as explained earlier in the text).

\begin{table}[htb]
\centering
{\begin{tabular}{|c|c|c|c|c|c|c|c|}  \hline
mirror  & complex&$G_2$ & $r$ &face  &vertex- &vertex- & special\\
vector &               &            &       &          & figure&set     & group  \\[.05in]
\hline
\hline
$(1,2)$  & $\K_1(1,2)$& $D_2$  & $4$ &$4_s$ & cuboctahedron&$\Lambda_{(a,a,0)}$&$[3,4]$\\
\hline
             & $\K_2(1,2)$& $C_3$& $3$ &$4_s$ & cube&$\Lambda_{(a,a,a)}$&$[3,4]$\\
\hline
             & $\K_3(1,2)$& $D_3$& $6$   &$4_s$ &double cube&$\Lambda_{(a,a,a)}$&$[3,4]$\\
\hline
             & $\K_4(1,2)$& $D_2$& $4$ & $6_s$&octahedron&$a\mathbb{Z}^3$&$[3,4]$\\
\hline
             & $\K_5(1,2)$& $D_2$& $4$ &$6_s$&double square&$V_a$&$[3,4]$\\
\hline
             & $\K_6(1,2)$& $D_4$& $8$ &$6_s$&double octahedron&$a\mathbb{Z}^3$&$[3,4]$\\
\hline
             & $\K_7(1,2)$& $D_3$& $6$ &$6_s$&double tetrahedron&$W_a$&$[3,4]$\\
\hline
             & $\K_8(1,2)$& $D_2$& $4$ &$6_s$&cuboctahedron&$\Lambda_{(a,a,0)}$&$[3,4]$\\
\hline
\hline
$(1,1)$  & $\K_1(1,1)$& $D_3$& $6$ &$\infty_3$&double cube&$\Lambda_{(a,a,a)}$&$[3,4]$\\
\hline
             & $\K_2(1,1)$& $D_2$& $4$ &$\infty_3$&double square&$V_a$&$[3,4]$\\
\hline
             & $\K_3(1,1)$& $D_4$& $8$ &$\infty_3$&double octahedron&$a\mathbb{Z}^3$ & $[3,4]$ \\
\hline
             & $\K_4(1,1)$& $D_3$ & $6$ &$\infty_4$& double tetrahedron&$W_a$ &$[3,4]$\\
\hline
             & $\K_5(1,1)$& $D_2$& $4$ &$\infty_4$&ns-cuboctahedron&$\Lambda_{(a,a,0)}$&$[3,4]$ \\
\hline
             & $\K_6(1,1)$& $C_3$& $3$ &$\infty_4$&tetrahedron& $W_a$&$[3,4]^+$\\
\hline
             & $\K_7(1,1)$& $C_4$& $4$ &$\infty_3$&octahedron&$a\mathbb{Z}^3$ & $[3,4]^+$ \ \\
\hline
             & $\K_8(1,1)$& $D_2$& $4$ &$\infty_3$&ns-cuboctahedron&$\Lambda_{(a,a,0)}$&$[3,4]$ \\
\hline
             & $\K_9(1,1)$& $C_3$& $3$ &$\infty_3$&cube&$\Lambda_{(a,a,a)}$&$[3,4]^+$ \\
\hline
\hline
$(0,1)$  & $\K(0,1)$& $D_2$& $4$ &$\infty_2$&ns-cuboctahedron&$\Lambda_{(a,a,0)}$&$[3,4]$ \\
\hline
\hline
$(0,2)$  & $\K(0,2)$& $D_2$& $4$ &$\infty_2$&cuboctahedron& $\Lambda_{(a,a,0)}$&$[3,4]$ \\
\hline
\hline
$(2,1)$  & $\K(2,1)$& $D_2$& $4$ & $6_c$ &ns-cuboctahedron&$\Lambda_{(a,a,0)}$&$[3,4]$ \\
\hline
\hline
$(2,2)$  & $\K(2,2)$& $D_2$& $4$ &$3_c$&cuboctahedron&$\Lambda_{(a,a,0)}$&$[3,4]$\\
\hline
\hline
\end{tabular}
\medskip
\caption{The 21 simply flag-transitive regular polygonal complexes in $\E$ which are not regular polyhedra.}
\label{tabone}}
\end{table}

\section{Subcomplex relationships} 

In this last section we provide the full net of subcomplex relationships for regular polygonal complexes in space, including regular polyhedra. In each diagram, a vertical or slanted line indicates that the complex $\K$ at the bottom is a subcomplex  of the complex $\mathcal{L}$ at the top, or equivalently, that the complex $\mathcal{L}$ at the top is a ``compound" of congruent copies of the complex $\K$ at the bottom; the label attached to the line is the number of congruent copies of $\K$ in this representation of $\mathcal{L}$ as a compound, allowing~$\infty$. 

Across the bottom of a diagram we usually find polyhedra and occasionally (indecomposable) complexes from among those discussed in this paper or in \cite{pelsch}. Polyhedra can only occur at the bottom of a diagram. Two diagrams have three layers and each contains the diagrams for the complexes in the middle layer as subdiagrams (these diagrams are not listed separately). 

The regular polyhedra in $\E$ were enumerated in Gr\"unbaum~\cite{gr} and Dress~\cite{d1,d2}. We refer the reader to McMullen \& Schulte~\cite[Section 7E]{arp} (or \cite{ordinary}) for a quick method of arriving at the full characterization, and for the notation for polyhedra used in the diagrams. For a regular $4$-apeirotope $\mathcal{P}$ we write $skel_{2}(\mathcal{P})$ for the $2$-skeleton of $\mathcal{P}$ (see Section~\ref{terba}). 

The complexes are grouped according to their type of faces, beginning with the complexes with finite, planar or skew, faces and followed by the complexes with infinite, zigzag or helical, faces.
\bigskip

\noindent
PLANAR FACES\newline
\vskip.08in

\input{diagram1.pic}
\vspace{.1in}

\input{diagram2.pic}

\vspace{0.7cm}

\noindent
SKEW FACES\newline
\vskip.08in

\input{diagram6.pic}
\vspace{.1in}

\input{diagram7.pic}
\vspace{.1in}

\input{diagram8.pic}

\vspace{0.7cm}

\noindent
ZIGZAG FACES\newline
\vskip.08in

\input{diagram3.pic}
\vspace{.1in}

\input{diagram4.pic}
\vspace{.1in}

\input{diagram5.pic}

\vspace{0.7cm}

\noindent
HELICAL FACES\newline
\vskip.08in

\input{diagram9.pic}
\vspace{.1in}

\input{diagram10.pic}
\vspace{.1in}

\input{diagram11.pic}

\vspace{0.2cm}

The preceding diagrams provide the full net of {\em geometric\/} subcomplex relationships for regular polygonal complexes in $\E$. We have not investigated the question if there are any other {\em combinatorial\/} subcomplex relationships. This of course is closely related to the problem of determining the full combinatorial automorphism group for each complex, which in general could be larger than the symmetry group and in particular have a larger flag stabilizer. For example, a (geometrically) simply flag-transitive polygonal complex may not be combinatorially simply flag-transitive. Clearly, this does not occur for regular polyhedra, where the two groups are isomorphic. 

It would also be interesting to know if the geometrically distinct regular polygonal complexes described here are also combinatorially distinct, or if two of these complexes can be combinatorially isomorphic in a non-geometric way. We conjecture that the regular polygonal complexes in $\E$ are indeed also combinatorially distinct. (We are ignoring here the case of blended polyhedra, where the relative size of the components of the blend provides a continuous parameter for the geometric realizations.)

\vskip.1in
\noindent
{\bf Acknowledgment}
We are grateful to an anonymous referee for a thoughtful review with valuable comments. 
%    Bibliographies can be prepared with BibTeX using amsplain,
%    amsalpha, or (for "historical" overviews) natbib style.
\bibliographystyle{amsplain}
%    Insert the bibliography data here.

\end{document}